\documentclass[a4paper,11pt]{amsart}
\usepackage{amssymb,amsthm,amsmath,amscd,graphicx,enumerate,color,enumerate}

\pagestyle{myheadings}
\linespread{1.1}

%% Theorems and Lemmas
\newtheorem{theorem}{Theorem}
\newtheorem*{theorem*}{Theorem}

\newtheorem{lemma}{Lemma}

\theoremstyle{definition}

\newtheorem{remark}{Remark}

%% Custom commands

\newcommand{\defn}{\ensuremath{\overset{\mathrm{def}}{=}}}
\newcommand{\dd}{\ensuremath{\mathrm{d}}}
\newcommand{\ii}{\ensuremath{\mathrm{i}}}

\renewcommand{\Im}{\ensuremath{\mathsf{Im}\,}}
\renewcommand{\Re}{\ensuremath{\mathsf{Re}\,}}
\renewcommand{\l}{\ensuremath{\left\langle}}
\renewcommand{\r}{\ensuremath{\right\rangle}}

\title[Distributional Boundary Data]{Elliptic boundary value problems in convex polygons with low regularity boundary data via the unified method}

\begin{document}
\author[A.C.L. Ashton]{A.C.L. Ashton}
       \address{DAMTP, University of Cambridge.}
       \email{a.c.l.ashton@damtp.cam.ac.uk}
\author[A.S. Fokas]{A.S. Fokas}
       \address{DAMTP, University of Cambridge.}
       \email{t.fokas@damtp.cam.ac.uk}

\date\today

\thanks{The first author is grateful for the support of Emmanuel College, University of Cambridge.}

\begin{abstract} 
We use novel integral representations developed by the second author to prove certain rigorous results concerning elliptic boundary value problems in convex polygons. Central to this approach is the so-called global relation, which is a non-local equation in the Fourier space that relates the known boundary data to the unknown boundary values. Assuming that the global relation is satisfied in the weakest possible sense, i.e. in a distributional sense, we prove there exist solutions to Dirichlet, Neumann and Robin boundary value problems with distributional boundary data. We also show that the analysis of the global relation characterises in a straightforward manner the possible existence of both integrable and non-integrable corner-singularities.
\end{abstract}

\maketitle

\section{Introduction}
In this paper we present a new means to study rigorous aspects of boundary value problems for elliptic PDEs in convex polygons. We concentrate on the basic elliptic equation
\begin{equation} -\Delta q + \beta^2 q =0, \label{basiceqn} \end{equation}
where $\beta^2>0$ (modified Helmholtz). The case $\beta=0$ (Laplace) was studied in \cite{fulton2004analytical} for continuous boundary data and can be extended to distributional boundary data using the methods presented here. All our results hold if $\beta^2\leq 0$ (Helmholtz or Laplace), but for economy of presentation we state our results for $\beta^2>0$ only.

We study (a) existence of solutions with distributional boundary values, (b) a priori estimates for boundary data regularity and (c) corner singularities. The approach we use is based on the formal results obtained in \cite{fokas1997unified} where novel integral representations for solutions to the basic elliptic equations in a convex polygon were derived, under the \emph{assumption} that the solution exists. The rigorous methodology is based on the following procedure: we define a function using the novel integral representations of \cite{fokas1997unified} and show that this function satisfies the relevant PDE and converges to the desired data on the boundary provided that the global relation is satisfied.

Regarding (a), we first generalise the integral representations in \cite{fokas1997unified} so they make sense for distributional boundary values. We then show that the function defined by the associated integral representation satisfies the boundary value problem with distributional boundary values \emph{provided} that the unknown boundary values satisfy the global relation in the distributional sense.

Regarding (b), we provide classical a priori estimates for the unknown boundary data by analysing the global relation.

Regarding (c), we use the global relation to prove the existence of corner singularities when certain mixed boundary data are prescribed at adjacent edges of the polygon. The arguments are based on simple asymptotic evaluation of terms in the global relation.

In this paper we do not address the fundamental question of existence of solution to the global relation. It is known that if Dirichlet data is in $H^{s+1/2}(\partial\Omega)$ with $s\in [-\tfrac{1}{2},\tfrac{1}{2}]$, then the global relation uniquely defines the unknown boundary values, which belongs to $H^{s-1/2}(\partial\Omega)$ \cite{ashton2012rigorous}. A new, more direct method for solving the Dirichlet-Neumann map via the global relation was presented in \cite{ashton2012spectral} for Dirichlet Data in $H^1(\partial\Omega)$. This method can be extended to boundary data with lower regularity. We expect that if one works on a suitable quotient space, then regularity results for the Dirichlet-Neumann map will hold for general distributional boundary data. The relevant space quotients out finite linear combinations of Dirac measures at the vertices (the relevance of this space is discussed in Theorem \ref{unique}).

For a more classical approach to boundary value problems with distributional boundary data, we refer the reader to \cite{babuvska2008boundary,dauge1988elliptic} and references therein.

Several numerical techniques for the implementation of the unified method of \cite{fokas1997unified,fokas2000integrability} applied to linear elliptic PDEs are discussed in \cite{fornberg2011numerical,fulton2004analytical,sifalakis2008generalized,smitheman2010spectral}.

\section{The Spaces of distributions}
Let $\Omega \subset \mathbf{R}^2 \simeq \mathbf{C}$ denote the interior of a polygon with vertices $\{z_i\}_{i=1}^n$ and sides $\Gamma_i = (z_i,z_{i+1})$. Let $\alpha_i = \arg(z_{i+1}-z_i)$ denote the angle the side $\Gamma_i$ makes with the positive real axis and let $|\Gamma_i|$ be the length of the side $\Gamma_i$. It was shown in \cite{fokas1997unified} that if one assumes that there exists a solution to \eqref{basiceqn} in $\Omega$ with sufficient smoothness, then the solution has the following representation:
\begin{equation} q(z,\bar{z}) = \frac{1}{4\pi\ii} \sum_{i=1}^n \int_{\ell_i} e^{\ii \lambda z - \ii \beta^2 \bar{z}/\lambda} \rho_i(\lambda)\, \frac{\dd\lambda}{\lambda}, \label{rep} \end{equation}
where the functions $\{\rho_i(\lambda)\}_{i=1}^n$, called spectral functions, are defined by 
\begin{equation} \rho_i(\lambda) = \int_{\Gamma_i} e^{-\ii\lambda z'+\ii\beta^2 \bar{z}'/\lambda}\left[ \left( \frac{\partial q}{\partial z'}+\ii \lambda q\right) \dd z' + \left(\frac{\ii\beta^2}{\lambda} q-\frac{\partial q}{\partial \bar{z}'} \right) \dd\bar{z}'\right] \end{equation}
and  $\{\ell_i\}_{i=1}^n$ denote the rays in the complex plane with $\arg (\lambda|_{\ell_i})= -\alpha_i$, orientated towards infinity. Furthermore, the spectral functions satisfy the global relation
\begin{equation} \sum_{i=1}^n \rho_i(\lambda) =0. \label{gr1} \end{equation}
In what follows we will first generalise these results so that they make sense for distributions (generalised functions). In this respect we will use standard results from the theory of distributions, a comprehensive account of which can be found in \cite{hormander1985analysis1}. We will be interested in distributions supported on the closure of the edges $\Gamma_i$ (the edges $\Gamma_i$ do not contain the vertices). We use $\mathcal{E}[0,1]$ to denote the space of smooth functions from $[0,1]$ to $\mathbf{C}$. This space is endowed with a topology generated from the semi-norms
\[ \delta_n(\varphi) = \max_{\tau \in [0,1]} \left| \varphi^{(n)}(\tau)\right|, \quad n=0,1,2,\ldots , \]
a sequence of functions $\varphi_k\rightarrow 0$ in $\mathcal{E}[0,1]$ if and only if $\delta_n(\varphi_k)\rightarrow 0$ for all $n\in \mathbf{N}\cup \{0\}$. This is a Frech\'{e}t space and we denote its topological dual by $\mathcal{E}'[0,1]$, which is the space of continuous linear forms from $\mathcal{E}[0,1]$ to $\mathbf{C}$. The pairing between $\mathcal{E}'[0,1]$ and $\mathcal{E}[0,1]$ is denoted by $\l \cdot, \cdot \r$. It is well known that a linear form $u:\mathcal{E}[0,1]\rightarrow \mathbf{C}$ is an element of $\mathcal{E}'[0,1]$ if and only if there are constants $N$ and $C$ such that
\begin{equation} \left| \l u, \varphi \r \right| \leq C \sum_{n\leq N}\delta_n(\varphi), \label{seminorms} \end{equation}
for all $\varphi \in \mathcal{E}[0,1]$.

We will also make use of the less familiar space of Schwartz distributions on an open interval. To define this space of distributions, we first setup the appropriate space of test functions. Let $\mathcal{S}(0,1)$ denote the space of smooth functions from $(0,1)$ to $\mathbf{C}$ such that
\[ \sigma_{mn}(\varphi) = \sup_{\tau \in (0,1)} | \tau^{-m} (1-\tau)^{-m} \varphi^{(n)}(\tau) | < \infty,\quad n,m = 0,1,2,\ldots ,\]
so that these functions vanish to all orders at the boundary of $[0,1]$. The topology generated by these semi-norms makes $\mathcal{S}(0,1)$ a Frech\'{e}t space. The topological dual, consisting of continuous linear forms from $\mathcal{S}(0,1)$ to $\mathbf{C}$, is denoted by $\mathcal{S}'(0,1)$. A linear form $v:\mathcal{S}(0,1)\rightarrow \mathbf{C}$ is an element of $\mathcal{S}'(0,1)$ if and only if there exist constants $D$ and $M$ such that
\[ \left| \l v, \varphi \r \right| \leq D \sum_{m,n\leq M} \sigma_{mn}(\varphi), \]
for all $\varphi \in \mathcal{S}(0,1)$. For $\varphi \in \mathcal{S}(0,1)$ we define the Fourier transform by
\[ \hat{\varphi}(\lambda) = \int_0^1 e^{-\ii \lambda \tau} \varphi (\tau)\, \dd \tau. \]
If $\check{\varphi}(x) = \varphi(-x)$, the Fourier inversion theorem can be written as $2\pi \varphi = ((\hat{\varphi})\hat{\,})\check{\,}$. Using the natural extension of an element of $\mathcal{E}'[0,1]$ to $\mathcal{E}'(\mathbf{R})$, the space of distributions on $\mathbf{R}$ with compact support, we can define the Fourier transform on $\mathcal{E}'[0,1]$ via
\[ \hat{u}(\lambda) = \l u_\tau, e^{-\ii\lambda \tau} \r. \]
See \cite[Ch. 7]{hormander1985analysis1} for the standard treatment of the Fourier transform on $\mathcal{E}'(\mathbf{R})$.

We now generalise the integral representation \eqref{rep} so it makes sense for distributional boundary values. To facilitate this generalisation we need a local description of the edges $\Gamma_i$ so that we can define distributions on them. We use the local parametrisations $\psi_i:[0,1]\rightarrow \Gamma_i$, with
\[ \psi_i: \tau\mapsto \tau z_{i+1} + (1-\tau)z_i. \]
For a function $f:\Gamma_i\rightarrow \mathbf{C}$ we write its pullback by $\psi_i$ via
\[ \psi_i^*(f)(\tau) = f(\psi(\tau)). \]
1-forms are pulled back by $\psi_i$ via
\[ \psi_i^*(\dd z) = e^{\ii \alpha_i} |\Gamma_i| \, \dd \tau, \quad \psi_i^*(\dd \bar{z}) = e^{-\ii \alpha_i} |\Gamma_i|\, \dd \tau. \]
We now define the spectral functions $\rho_i(\lambda)$ by
\begin{align}
\frac{\rho_i(\lambda)}{\ii |\Gamma_i|} &= \left\langle \partial_\mathbf{n} q_i, \psi_i^*\!\left(e^{-\ii\lambda z' + \ii \beta^2 \bar{z}'/\lambda}\right)\right\rangle \label{rho1}\\ 
& \quad + \left(\lambda e^{\ii\alpha_i} + \frac{\beta^2}{\lambda e^{\ii\alpha_i} }\right) \left \langle  q_i, \psi_i^*\!\left(e^{-\ii\lambda z' + \ii \beta^2 \bar{z}'/\lambda}\right)\right\rangle, \nonumber  \end{align}
where $\partial_\mathbf{n} q_i, q_i \in \mathcal{E}'[0,1]$ for $i=1,\ldots,n$. The first term is motivated by the identification
\begin{align*} \int_{\Gamma_i} e^{-\ii \lambda z'+\ii \beta^2 \bar{z}'/\lambda}\left( \frac{\partial q}{\partial z'}\, \dd z' - \frac{\partial q}{\partial\bar{z}'}\, \dd \bar{z}'\right) 
&=  |\Gamma_i| \int_0^1 \psi_i^*\!\left[ e^{-\ii \lambda z'+\ii \beta^2 \bar{z}'/\lambda} \frac{\partial q}{\partial \mathbf{n}} \right]\!(\tau) \,\dd \tau \\
& \rightsquigarrow |\Gamma_i|\l \partial_\mathbf{n} q_i, \psi_i^*\!\left(e^{-\ii\lambda z' + \ii \beta^2 \bar{z}'/\lambda}\right)  \r.
\end{align*}
In this calculation we have used the identities
\[ \frac{\partial q}{\partial z'}\bigg|_{\Gamma_i} = \frac{e^{-\ii \alpha_i}}{2} \left( \frac{\partial q}{\partial \mathbf{t}}+\ii \frac{\partial q}{\partial \mathbf{n}}\right)\bigg|_{\Gamma_i}, \qquad  \frac{\partial q}{\partial \bar{z}'}\bigg|_{\Gamma_i} = \frac{e^{\ii \alpha_i}}{2} \left( \frac{\partial q}{\partial \mathbf{t}}-\ii \frac{\partial q}{\partial \mathbf{n}}\right)\bigg|_{\Gamma_i}, \]
where $\partial/\partial \mathbf{t}$ and $\partial/\partial \mathbf{n}$ are derivatives in the tangential and (outward) normal directions along $\Gamma_i$. The second term in \eqref{rho1} is similarly motivated, e.g.
\begin{align*} \frac{\ii \beta^2}{\lambda} \int_{\Gamma_i} e^{-\ii \lambda z'+\ii \beta^2 \bar{z}'/\lambda} q\, \dd \bar{z}' &= \frac{\ii \beta^2|\Gamma_i|e^{-\ii \alpha_i}}{\lambda} \int_0^1 \psi_i^*\!\left[ e^{-\ii \lambda z'+\ii \beta^2 \bar{z}'/\lambda} q\right]\! (\tau)\, \dd \tau \\
&\rightsquigarrow\frac{\ii \beta^2|\Gamma_i|e^{-\ii\alpha_i}}{\lambda}\left \langle  q_i, \psi_i^*\!\left(e^{-\ii\lambda z' + \ii \beta^2 \bar{z}'/\lambda}\right)\right\rangle.
\end{align*}
So the formal identifications are
\[ \partial_\mathbf{n} q_i \rightsquigarrow \psi_i^*\!\left( \frac{\partial q}{\partial \mathbf{n}}\right), \quad q_i \rightsquigarrow \psi_i^*(q). \]

\section{Distributional Boundary Data}
In this section we will outline how to setup a given boundary value problem so that the boundary data can be distributional. Our aim is to show that the function defined by \eqref{rep}, where the $\{\rho_i\}_{i=1}^n$ are given in \eqref{rho1}, satisfies the following boundary value problem:
\begin{subequations}\label{bvp}
\begin{align} -\Delta q + \beta^2 q &=0 \,\quad \textrm{in $\Omega$} \\
\lim_{z\rightarrow \Gamma_i} q &= q_i \quad \textrm{in $\mathcal{S}'(\Gamma_i)$ for $i=1,\ldots,n$,}
\end{align}
\end{subequations}
where $q_i\in \mathcal{E}'[0,1]$. We need to specify a notion of convergence, via which the solution \eqref{rep} converges to a distribution on $\Gamma_i$. 

For a given polygon $\Omega$ we inscribe within it a one parameter family of polygons
\[ \Omega_\epsilon = \{ z\in \Omega: \mathrm{dist}(z,\partial\Omega)\geq \epsilon\}. \]
For $\epsilon>0$ sufficiently small we have $\partial\Omega \simeq \partial\Omega_\epsilon$ and we can define the edges of $\partial\Omega_\epsilon$ by $\Gamma_i^\epsilon$ for $i=1,\ldots,n$ as shown in Figure \ref{epsdomain}. Now suppose $q\in C^\infty(\Omega)$ is a given function. Using a local parametrisation $\psi_i^\epsilon:[0,1] \rightarrow \Gamma_i^\epsilon$, we can define a distribution $q_i^\epsilon \in \mathcal{S}'(0,1)$ by
\begin{equation} \l q_i^\epsilon, \varphi \r = \int_0^1 \left( \psi_i^\epsilon \right)^*\!\!(q)(\tau) \varphi(\tau)\, \dd \tau, \label{distlim} \end{equation}
for all $\varphi \in \mathcal{S}(0,1)$. As $\epsilon \downarrow 0$ we have $\Gamma_i^\epsilon \rightarrow \Gamma_i$ and $\psi_i^\epsilon \rightarrow \psi_i$. This limit and its equivalence to others modes of convergence for boundary values are discussed in \cite{babuvska2008boundary} (cf. \cite{seeley1966singular}). Our definition of $q\in C^\infty(\Omega)$ tending to a distribution on $\Gamma_i$ is
\[ \Big\{ \lim_{z\rightarrow \Gamma_i} q = q_i \,\,\, \textrm{in $\mathcal{S}'(\Gamma_i)$} \Big\} \quad \Leftrightarrow \quad \Big\{ \lim_{\epsilon\rightarrow 0} q_i^\epsilon = q_i \,\,\, \textrm{in $\mathcal{S}'(0,1)$} \Big\}. \]
\begin{figure}\label{epsdomain}
\begin{center}
\includegraphics[scale=0.8]{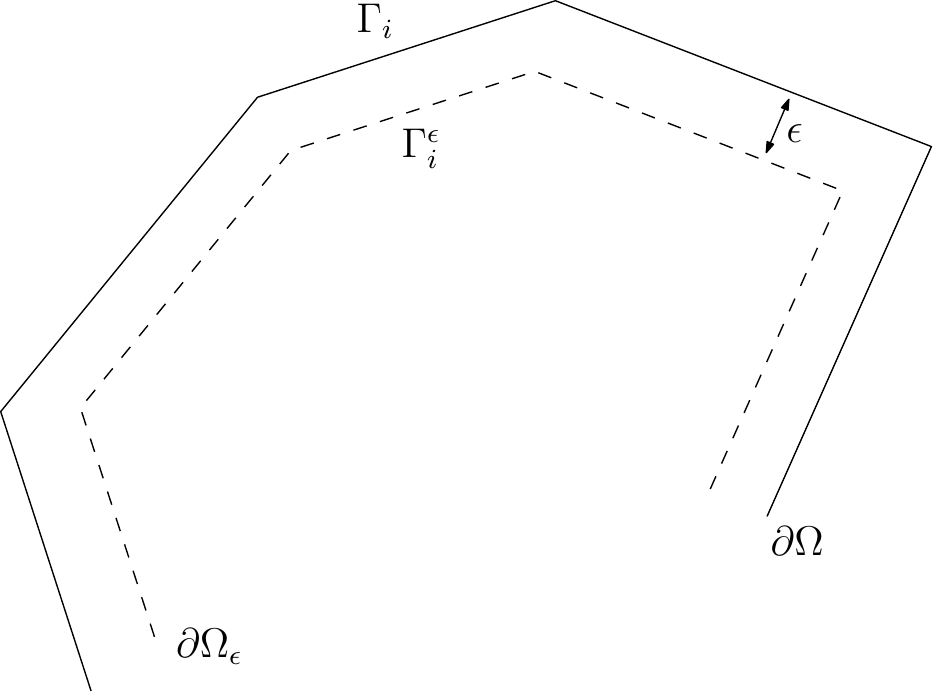}
\end{center}
\caption{\small The boundaries $\partial\Omega$ and $\partial\Omega_\epsilon$.}
\end{figure}
These limits are understood in the topology of $\mathcal{S}'(0,1)$. Recall that our boundary datum is an element of $\mathcal{E}'[0,1]$; on the other hand we are taking limits in the topology of $\mathcal{S}'(0,1)$ and \emph{not} in the topology of $\mathcal{E}'[0,1]$ (the test function appearing in \eqref{distlim} is an element of $\mathcal{S}(0,1)$ and \emph{not} of $\mathcal{E}[0,1]$). Thus, we must make sure that we do not lose any information in this limit process.

In this respect we note $\mathcal{S}(0,1) \subset \mathcal{E}[0,1]$ and that the inclusion is continuous\footnote{Meaning that if a sequence $\{\varphi_k\}_{k\geq 1}$ tends to zero in $\mathcal{S}(0,1)$ (i.e. $\sigma_{mn}(\varphi_k)\rightarrow 0$ for all $m,n$), it also tends to zero in $\mathcal{E}[0,1]$ (i.e. $\delta_n(\varphi_k)\rightarrow 0$ for all $n$).}. Hence $\mathcal{E}'[0,1] \hookrightarrow \mathcal{S}'(0,1)$. Therefore there is a natural projection $\pi : \mathcal{E}'[0,1] \rightarrow \mathcal{S}'(0,1)$, which is obtained by restricting $u\in \mathcal{E}'[0,1]$ to test functions in $\mathcal{S}(0,1)$. Given a linear form $v\in \mathcal{S}'(0,1)$, the Hahn-Banach theorem implies that we can extend this linear form to a linear form $u_0\in \mathcal{E}'[0,1]$ with $\pi (u_0)=v$. The general solution to the equation $\pi (u)=v$ is \cite[p. 178]{estrada2000singular}
\[ u = u_0 + \sum_{k=0}^N a_k \delta^{(k)}_{0} + \sum_{k=0}^N a_k \delta^{(k)}_{1}. \]
This observation means that the difference between $u\in \mathcal{E}'[0,1]$ and $\pi(u)$ can only be a collection of Dirac measures and derivatives thereof supported at the boundary of $[0,1]$. Intuitively speaking this is because the test functions in $\mathcal{S}(0,1)$ vanish to all orders at the boundary of $[0,1]$, so they are unable to ``detect" a distribution in $\mathcal{E}'[0,1]$ with support contained in the boundary of $[0,1]$. These observations imply that knowledge of
\[  \lim_{z\rightarrow \Gamma_i} q = q_i \,\,\, \textrm{in $\mathcal{S}'(\Gamma_i)$}  \]
yields
\begin{equation} \lim_{z\rightarrow \Gamma_i} q= q_i + \sum_{j=0}^N \left( a_j \delta^{(j)}_{z_i} + b_j \delta^{(j)}_{z_{i+1}}\right) \,\,\, \textrm{in $\mathcal{E}'(\Gamma_i)$}, \label{diraccontrib} \end{equation}
for some constants $\{a_j,b_j \}_{j=0}^N$ and some integer $N$. The possibility of the existence of the above additional contributions to the right hand side of \eqref{diraccontrib} does not affect the answer to the questions of existence of a solution to \eqref{bvp}, since we can always consider the new function (distribution)
\[ \tilde{q} = q - \sum_{i=1}^n \sum_{j=0}^N c_{ij} \delta^{(j)}_{z_i}, \]
for appropriate $\{ c_{ij}\}$ which satisfies the relevant PDE in $\Omega$ and converges to the desired distribution in $\mathcal{E}'[0,1]$.

The following result shows that $\mathcal{S}'(0,1)$ is, in some sense, the right space of distributions to take limits in.
\begin{theorem}\label{unique}
Let $\Delta[0,1]$ denote the space of all finite linear combinations of Dirac measures and derivatives thereof supported at $0$ and $1$. Given $q_i \in \mathcal{E}'[0,1]$ ($i=1,\ldots ,n)$, there is at most one of each $\partial_\mathbf{n}q_i \in \mathcal{E}'[0,1]/\Delta[0,1]$ ($i=1,\ldots, n$) such that the global relation is satisfied.
\end{theorem}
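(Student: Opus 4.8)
The plan is to turn the statement into a uniqueness assertion for a homogeneous problem and then to analyse the resulting identity in the complex $\lambda$-plane. Suppose two normal derivatives $\partial_\mathbf{n}q_i$ and $\partial_\mathbf{n}q_i'$ both satisfy the global relation \eqref{gr1} for the \emph{same} Dirichlet data $\{q_i\}$, and set $w_i=\partial_\mathbf{n}q_i-\partial_\mathbf{n}q_i'\in\mathcal{E}'[0,1]$. Since the $q_i$-terms in \eqref{rho1} are common to both, they cancel on subtracting, and the claim is equivalent to showing that each $w_i$ lies in $\Delta[0,1]$; by the discussion preceding the theorem this is the same as $\pi(w_i)=0$, i.e. $\langle w_i,\varphi\rangle=0$ for every $\varphi\in\mathcal{S}(0,1)$. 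Using the inversion formula it is convenient to rephrase the target via Parseval as
\[ \langle w_i,\varphi\rangle=\frac{1}{2\pi}\int_{\mathbf{R}}\hat\varphi(\xi)\,\hat w_i(-\xi)\,\dd\xi=0, \]
and to record the Fourier description of the obstruction space: since $\widehat{\delta_0^{(k)}}(\zeta)=(\ii\zeta)^k$ and $\widehat{\delta_1^{(k)}}(\zeta)=(\ii\zeta)^k e^{-\ii\zeta}$, membership $w_i\in\Delta[0,1]$ is equivalent to $\hat w_i(\zeta)=P_i(\zeta)+Q_i(\zeta)e^{-\ii\zeta}$ for polynomials $P_i,Q_i$.

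Next I would make the $\lambda$-dependence explicit. Writing $z'=\psi_i(\tau)=z_i+\tau|\Gamma_i|e^{\ii\alpha_i}$ one finds that the $i$-th term of the (subtracted) global relation is
\[ \tilde\rho_i(\lambda)=\ii|\Gamma_i|\,e^{-\ii\lambda z_i+\ii\beta^2\bar z_i/\lambda}\,\hat w_i(k_i(\lambda)),\qquad k_i(\lambda)=|\Gamma_i|\Big(\lambda e^{\ii\alpha_i}-\frac{\beta^2 e^{-\ii\alpha_i}}{\lambda}\Big). \]
Because $\supp w_i\subset[0,1]$ is compact, the Paley--Wiener--Schwartz theorem guarantees each $\hat w_i$ is entire of exponential type with growth governed by $\mathrm{conv}(\supp w_i)$. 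Two geometric facts then drive the analysis. First, along the ray in the \emph{outward normal} direction of $\Gamma_i$ one has $\lambda e^{\ii\alpha_i}\in\mathbf{R}$, so $k_i(\lambda)$ is real and exhausts all of $\mathbf{R}$; thus this ray samples $\hat w_i$ over the entire real axis. Second, by convexity the outward normal of $\Gamma_i$ attains the support function of $\Omega$ precisely on $\overline{\Gamma_i}$, so along that ray the exponential weight $|e^{-\ii\lambda z'}|$ is maximal and constant over $\Gamma_i$, every non-adjacent edge is exponentially subdominant, and the two adjacent edges can match the maximal rate only through the mass of $w_{i\pm1}$ located at the shared vertices $z_i,z_{i+1}$ --- that is, through $\Delta[0,1]$-contributions.

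With this in hand I would isolate the $i$-th contribution. Fixing $\varphi\in\mathcal{S}(0,1)$, the aim is to integrate the identity $\sum_j\tilde\rho_j(\lambda)=0$ along the outward normal ray of $\Gamma_i$ against a weight built from $\hat\varphi$ (after the change of variables relating $\xi$ and $k_i(\lambda)$), so that the $i$-th term reproduces $\langle w_i,\varphi\rangle$ through the Parseval formula above. The terms with $j\neq i$ are analytic and decay in a half-plane dictated by the position of $\Gamma_j$ relative to the normal of $\Gamma_i$; deforming the contour into that half-plane, together with the rapid decay of $\hat\varphi$, should annihilate every non-adjacent term outright and reduce the adjacent ones to their vertex (quasi-polynomial) parts, which integrate against $\hat\varphi$ to derivatives of $\varphi$ at the endpoints and hence to zero. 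This would give $\langle w_i,\varphi\rangle=0$ for all $\varphi\in\mathcal{S}(0,1)$, i.e. $w_i\in\Delta[0,1]$.

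The hard part is precisely this separation of the $i$-th term from the rest, and it cannot be done by growth estimates alone: the support function of $[0,1]$ equals that of the two-point set $\{0,1\}$, so the exponential type (indicator) of $\hat w_i$ is blind to whether $w_i$ has mass in the open edge or only at the vertices. One is therefore forced to exploit the finer analytic structure --- the honest decay of each neighbouring term in its own sector under contour deformation, controlled uniformly by the Paley--Wiener bounds and the Schwartz decay of $\hat\varphi$ --- rather than mere magnitude. The residual vertex contributions are genuinely inseparable from those of the adjacent edges, which is exactly why uniqueness can hold only in the quotient $\mathcal{E}'[0,1]/\Delta[0,1]$. As a consistency check one may note that the same conclusion follows formally from uniqueness for the Dirichlet problem of \eqref{basiceqn}: with vanishing Dirichlet data the representation \eqref{rep} produces a solution with zero trace, hence the zero function, whose normal derivative must vanish up to the vertex-supported distributions that $\mathcal{S}(0,1)$ cannot detect.
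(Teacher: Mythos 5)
You should know at the outset that the paper contains no proof of Theorem \ref{unique}: the result is stated and its proof is explicitly deferred (``a more detailed treatment \ldots will be presented elsewhere''), with a pointer to \cite[Lemma 4]{ashton2012spectral} for ``an idea of the proof''. So your attempt must be judged on its own merits, and its preparatory steps are sound and consistent with the paper's machinery: the $q_i$-terms of \eqref{rho1} cancel on subtraction; $w_i\in\Delta[0,1]$ is indeed equivalent to $\langle w_i,\varphi\rangle=0$ for all $\varphi\in\mathcal{S}(0,1)$; your formula for the subtracted spectral function agrees with \eqref{fouriernote}; and on the distinguished ray the $i$-th term, integrated against your weight, reproduces $2\pi\langle w_i,\varphi\rangle$, exactly as in the change of variables in the proof of Theorem \ref{thm1}. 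Up to that point you have derived a true identity: in the gauge of Lemma \ref{distlem},
\[ 2\pi\langle w_i,\varphi\rangle \;=\; -\sum_{j\neq i}\int_0^\infty \hat{\varphi}\Big(\tfrac{\beta^2}{\lambda}-\lambda\Big)\,\tilde{\rho}_j(\lambda)\,\Big(1+\tfrac{\beta^2}{\lambda^2}\Big)\,\dd\lambda . \]

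The gap is the step you yourself flag as ``the hard part'', and it cannot be filled as proposed: the claim that contour deformation ``annihilates every non-adjacent term outright'' (and reduces adjacent ones to vertex parts) is false. Cauchy's theorem moves a ray within a sector of analyticity and decay and yields \emph{equality} of two contour integrals; it never forces an integral to vanish. Moreover these terms genuinely do not vanish: take $w_j=\delta_{\tau_0}$, a unit Dirac at an interior point $\zeta_0=\psi_j(\tau_0)$ of any edge $\Gamma_j$, $j\neq i$ (adjacent or not), so that $\Im\zeta_0<0$. Then $\tilde{\rho}_j(\lambda)=\ii|\Gamma_j|e^{-\ii\lambda\zeta_0+\ii\beta^2\bar{\zeta}_0/\lambda}$, and Fubini plus the substitution $k=\lambda-\beta^2/\lambda$ turn the $j$-th term into
\[ \ii|\Gamma_j|\int_0^1\varphi(\tau)\,\frac{4\beta\, v(\tau)}{\sqrt{u(\tau)^2+v(\tau)^2}}\,K_1\!\left(2\beta\sqrt{u(\tau)^2+v(\tau)^2}\right)\dd\tau,\qquad u(\tau)+\ii v(\tau)=\tau-\zeta_0 , \]
a pairing of $\varphi$ against a strictly positive (Poisson-type) kernel, hence nonzero whenever $0\leq\varphi\not\equiv0$. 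Each edge's influence on $\Gamma_i$ survives individually; only the \emph{sum} over $j\neq i$ vanishes, and, by the displayed identity, the vanishing of that sum is exactly the statement $\langle w_i,\varphi\rangle=0$ you are trying to prove --- the scheme is circular at the one point where it needs content. The same ceiling shows in the paper's own tools: the deformations of Lemma \ref{intlem} combined with the global relation produce only the combination $\int_{\ell_i}-\int_{\hat{\ell}_i}$ acting on $\tilde{\rho}_i$, in which the $\hat{w}_i$ contributions cancel (this is precisely how Theorem \ref{thm1} extracts the Dirichlet datum), so in the homogeneous case one learns only $0=0$; and Theorem \ref{regthm} applied to zero Dirichlet data gives $w_i\in C^\infty(0,1)$, not $w_i=0$. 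The conclusion is irreducibly global, and the only viable line in your proposal is your closing ``consistency check'': build $Q$ from $\{0,w_j\}$ via \eqref{rep}, use Theorem \ref{thm1} and the subsequent Remark to see that $Q$ has zero Dirichlet trace and Neumann trace $w_i$ in $\mathcal{S}'$, and then prove $Q\equiv0$ in $\Omega$. But uniqueness for the Dirichlet problem with merely $\mathcal{S}'$ boundary data (no maximum principle applies at this regularity) is precisely the nontrivial ingredient, and it is the content the paper defers to \cite[Lemma 4]{ashton2012spectral}.
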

This result implies that a solution of the global relation is unique modulo Dirac measures at the vertices of the polygon. By taking limits in $\mathcal{S}'(0,1)$ we effectively quotient out this degeneracy. In this sense, the topology on $\mathcal{S}'(0,1)$ is the right one to take limits in for distributional boundary data. A more detailed treatment of distributional solutions to the global relation will be presented elsewhere, but the interested reader may look at \cite[Lemma 4]{ashton2012spectral} for an idea of the proof.

To prove our main result we will need to interchange the order of some integrals. To this end, the following simple lemma is essential.
\begin{lemma}\label{expconverge}
For $z\in \Omega$ and for each $i$, the function
\[ \frac{1}{\lambda}e^{\ii \lambda z - \ii \beta^2\bar{z}/\lambda} \rho_i(\lambda) \]
decays exponentially as $|\lambda|\rightarrow 0$ and $|\lambda|\rightarrow \infty$ along the ray $\ell_i$.
\end{lemma}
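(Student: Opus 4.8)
The plan is to parametrise the ray $\ell_i$ by $\lambda = re^{-\ii\alpha_i}$ with $r\in(0,\infty)$, so that $|\lambda| = r$ and the two limits in the statement become $r\downarrow 0$ and $r\uparrow\infty$. First I would compute the pulled-back exponential $\varphi(\tau) = \psi_i^*\!\bigl(e^{-\ii\lambda z'+\ii\beta^2\bar z'/\lambda}\bigr)$ that enters the pairings in \eqref{rho1}. Writing $z' = z_i + \tau|\Gamma_i|e^{\ii\alpha_i}$ and substituting $\lambda = re^{-\ii\alpha_i}$, the exponent splits into a $\tau$-independent part $A(r) = -\ii r w_i + \ii\beta^2\bar w_i/r$, with $w_i = e^{-\ii\alpha_i}z_i$, plus a term linear in $\tau$ whose coefficient $B(r) = \ii|\Gamma_i|(\beta^2/r - r)$ is purely imaginary. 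Hence $\varphi(\tau) = e^{A(r)}e^{B(r)\tau}$, and since $B(r)$ is imaginary we get $|\varphi^{(n)}(\tau)| = |B(r)|^n\,|e^{A(r)}|$ for every $\tau$, where a short calculation gives $|e^{A(r)}| = e^{\Im(w_i)(r+\beta^2/r)}$.

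Next I would bound $\rho_i(\lambda)$ using only the continuity of $q_i,\partial_\mathbf{n}q_i\in\mathcal{E}'[0,1]$. By \eqref{seminorms} there are constants $C$ and $N$ with $|\langle q_i,\varphi\rangle|,\,|\langle\partial_\mathbf{n}q_i,\varphi\rangle|\leq C\sum_{n\leq N}\delta_n(\varphi) = C\,e^{\Im(w_i)(r+\beta^2/r)}\sum_{n\leq N}|B(r)|^n$. On $\ell_i$ the multiplier in \eqref{rho1} satisfies $|\lambda e^{\ii\alpha_i}+\beta^2/(\lambda e^{\ii\alpha_i})| = r+\beta^2/r$ and $|B(r)| = |\Gamma_i|\,|\beta^2/r - r|$, so this yields $|\rho_i(\lambda)|\leq P_i(r,1/r)\,e^{\Im(w_i)(r+\beta^2/r)}$ for a suitable polynomial $P_i$ in $r$ and $1/r$.

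Finally I would fold in the prefactor $\lambda^{-1}e^{\ii\lambda z-\ii\beta^2\bar z/\lambda}$, which is independent of $\tau$ and so sits outside the pairings; on $\ell_i$ its modulus is $r^{-1}e^{-\Im(\zeta_i)(r+\beta^2/r)}$ with $\zeta_i = e^{-\ii\alpha_i}z$. Multiplying the two exponentials collapses the exponent to
\[ \bigl(\Im(w_i)-\Im(\zeta_i)\bigr)(r+\beta^2/r) = -d_i(z)\,(r+\beta^2/r),\qquad d_i(z) := \Im\bigl(e^{-\ii\alpha_i}(z-z_i)\bigr), \]
leaving the bound $r^{-1}P_i(r,1/r)\,e^{-d_i(z)(r+\beta^2/r)}$. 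The decisive point, and the only place where the hypothesis $z\in\Omega$ together with convexity is used, is that $d_i(z)$ is exactly the perpendicular distance from $z$ to the line carrying $\Gamma_i$, measured towards the interior, which is strictly positive for every interior point of a convex polygon. With $d_i(z)>0$ the factor $e^{-d_i(z)(r+\beta^2/r)}$ dominates every power of $r$ and of $1/r$: as $r\uparrow\infty$ the term $-d_i(z)r$ forces exponential decay, while as $r\downarrow 0$ the term $-d_i(z)\beta^2/r$ does so and, in particular, kills the $r^{-1}$ prefactor. I expect the only real work to be the bookkeeping of this cancellation of exponentials and the verification that $d_i(z)>0$; there is no genuine analytic obstacle once the distributional estimate \eqref{seminorms} is in hand.
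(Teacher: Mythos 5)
Your proof is correct and follows essentially the same route as the paper: the paper's own proof consists of exactly your semi-norm bound on $\rho_i(\lambda)$ (a polynomial in $|\lambda|+\beta^2/|\lambda|$ times $\max_{\mathrm{cl}(\Gamma_i)}\left|e^{-\ii\lambda z'+\ii\beta^2\bar{z}'/\lambda}\right|$), after which it cites the calculations of Fulton et al.\ for the exponential bookkeeping that you carry out explicitly. Your identification of the decay rate $d_i(z)=\Im\left(e^{-\ii\alpha_i}(z-z_i)\right)>0$ for $z\in\Omega$, with convexity guaranteeing positivity and $\beta^2>0$ giving the decay as $|\lambda|\rightarrow 0$, is precisely the content of that cited calculation.
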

\begin{proof}
From the semi-norm estimates \eqref{seminorms} we know that there exist constants $C,N$ such that
\[ |\rho_i(\lambda)| \leq C \sum_{n=0}^N \left( |\lambda| +\frac{\beta^2}{|\lambda|}\right)^n \max_{\mathrm{cl}(\Gamma_i)} \left| e^{-\ii \lambda z' + \ii \beta^2 \bar{z}'/\lambda}\right|. \]
The proof now follows from calculations analogous to those in \cite[Lemma 1]{fulton2004analytical}.
\end{proof}
From this observation it is clear that the double integrals
\[ \int_0^1 \varphi(\tau) \left( \psi_i^\epsilon\right)^*\!\! \left[ \int_{\ell_j} e^{\ii \lambda z - \ii \beta^2 \bar{z}/\lambda} \rho_j(\lambda)\, \frac{\dd \lambda}{\lambda}\right]\!\!(\tau)\,\dd \tau \]
converge absolutely for each $1\leq i,j\leq n$ and $\epsilon>0$. Hence applying Fubini's theorem we can interchange the order of integration
\[ \int_{\ell_j} \rho_j(\lambda) \left[ \int_0^1 \left(\psi_i^\epsilon\right)^*\!\left( e^{\ii \lambda z - \ii \beta^2 \bar{z}/\lambda}\right)\! \varphi(\tau)\, \dd \tau \right] \frac{\dd \lambda}{\lambda}. \]
\begin{lemma}\label{intlem}
Let $\hat{\ell}_i$ denote the ray for which $\arg(\lambda|_{\hat{\ell}_i})=\pi - \alpha_i$, so that $\hat{\ell}_i$ is the continuation of the ray $\ell_i$. Then, for $j\neq i$:
\begin{align*} & \int_{\ell_j} \rho_j(\lambda)  \left[ \int_0^1 \psi_i^*\!\left(  e^{\ii \lambda z - \ii \beta^2 \bar{z}/\lambda}\right)\!\varphi(\tau)\,\dd \tau\right] \frac{\dd \lambda}{\lambda} \\ &\qquad = \int_{\hat{\ell}_i} \rho_j(\lambda) \left[ \int_0^1 \psi_i^*\!\left(  e^{\ii \lambda z - \ii \beta^2 \bar{z}/\lambda}\right)\!\varphi(\tau)\,\dd \tau\right] \frac{\dd \lambda}{\lambda}.
\end{align*}
\end{lemma}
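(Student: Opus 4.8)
The plan is to regard $\lambda$ as a complex variable and to deform the contour from $\ell_j$ to $\hat{\ell}_i$ by Cauchy's theorem, applied to the sector $S$ whose bounding rays are $\ell_j$ (where $\arg\lambda = -\alpha_j$) and $\hat{\ell}_i$ (where $\arg\lambda = \pi-\alpha_i$). Write the integrand as
\[ G(\lambda) = \frac{\rho_j(\lambda)}{\lambda}\int_0^1 \psi_i^*\!\left(e^{\ii\lambda z - \ii\beta^2\bar{z}/\lambda}\right)\!\varphi(\tau)\,\dd\tau. \]
First I would check that $G$ is holomorphic on $\mathbf{C}\setminus\{0\}$. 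Away from the origin, $\lambda\mapsto \psi_k^*(e^{-\ii\lambda z'+\ii\beta^2\bar{z}'/\lambda})$ is a holomorphic map into $\mathcal{E}[0,1]$, so pairing against the fixed distributions $\partial_\mathbf{n}q_j,q_j\in\mathcal{E}'[0,1]$ shows via \eqref{rho1} that $\rho_j$ is holomorphic there (the rational prefactor is harmless); the inner $\tau$-integral is holomorphic for the same reason, and $1/\lambda$ is holomorphic off the origin. Hence the integral of $G$ around the boundary of the truncated sector $\{\epsilon\leq|\lambda|\leq R\}\cap S$ vanishes.

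The key analytic input is the modulus of the combined exponential. With $\lambda = |\lambda|e^{\ii\theta}$, $z\in\Gamma_i$ and $z'\in\Gamma_j$, a direct computation gives
\[ \left|e^{\ii\lambda(z-z') - \ii\beta^2(\bar{z}-\bar{z}')/\lambda}\right| = \exp\!\left(-\left(|\lambda| + \frac{\beta^2}{|\lambda|}\right)\Im\!\left(e^{\ii\theta}(z-z')\right)\right). \]
Combining this with the seminorm bound for $\rho_j$ from the proof of Lemma \ref{expconverge} and the trivial bound on the $\tau$-integral yields
\[ |G(\lambda)| \leq \frac{C}{|\lambda|}\sum_{n=0}^N \left(|\lambda| + \frac{\beta^2}{|\lambda|}\right)^{\!n}\exp\!\left(-\left(|\lambda| + \frac{\beta^2}{|\lambda|}\right)d(\theta)\right), \]
where $d(\theta) = \min_{z\in\mathrm{cl}(\Gamma_i),\,z'\in\mathrm{cl}(\Gamma_j)}\Im(e^{\ii\theta}(z-z'))$. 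Here I would invoke convexity: traversing $\partial\Omega$ counterclockwise the interior lies to the left of each edge, so $\Im(e^{-\ii\alpha_j}(z-z'))\geq 0$ and $\Im(e^{-\ii\alpha_i}(z'-z))\geq 0$ for all $z\in\mathrm{cl}(\Gamma_i)$, $z'\in\mathrm{cl}(\Gamma_j)$. These are precisely the statements $d(-\alpha_j)\geq 0$ and $d(\pi-\alpha_i)\geq 0$, and the same two inequalities confine the directions $\arg(z-z')$ to $[\alpha_j,\alpha_i]$, whence $d(\theta)\geq 0$ for all $\theta\in[-\alpha_j,\pi-\alpha_i]$, with $d(\theta)>0$ in the interior of this sector. (When $\Gamma_i$ and $\Gamma_j$ are parallel the sector degenerates, $\ell_j=\hat{\ell}_i$, and there is nothing to prove.)

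With analyticity and decay in hand I would close the contour. Cauchy's theorem on the truncated sector shows that the integrals along $\ell_j$ and $\hat{\ell}_i$ (both oriented outward) differ only by the two connecting arcs at radii $\epsilon$ and $R$. On each arc the estimate above, with $d(\theta)\geq 0$, controls the integrand: as $R\to\infty$ the factor $e^{-|\lambda|d(\theta)}$ beats the polynomial prefactor, and as $\epsilon\to 0$ the factor $e^{-\beta^2 d(\theta)/|\lambda|}$ beats the singular prefactor $|\lambda|^{-n-1}$. The resulting equality is the assertion of the lemma. The step needing most care is the vanishing of the arc integrals, since $d(\theta)$ degenerates to $0$ on the two bounding rays (and, for adjacent edges, at the shared vertex); there a single exponential bound is unavailable and one must instead estimate $\int e^{-(|\lambda|+\beta^2/|\lambda|)d(\theta)}\,\dd\theta$ over the arc in the manner of Jordan's lemma, using that $d(\theta)$ vanishes only linearly at the endpoints.
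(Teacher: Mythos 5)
Your overall route --- holomorphy of the integrand away from $\lambda=0$, Cauchy's theorem on the truncated sector bounded by $\ell_j$ and $\hat{\ell}_i$, the exact formula for the modulus of the combined exponential, and the convexity argument pinning $\arg(z-z')$ between $\alpha_j$ and $\alpha_i$ --- is the same as the paper's. The gap is in your key estimate: you bound the inner $\tau$-integral \emph{trivially}, so the only decay in your bound for $|G(\lambda)|$ comes from $\exp\!\left(-(|\lambda|+\beta^2/|\lambda|)\,d(\theta)\right)$, fighting a prefactor $(|\lambda|+\beta^2/|\lambda|)^N$ whose degree $N$ is \emph{fixed} by the order of the distributions $q_j,\partial_\mathbf{n}q_j$ in \eqref{rho1}. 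This is fatal precisely in the cases the lemma is needed for. If $j=i\pm 1$ (adjacent edges, which occur in the sum in the proof of Theorem \ref{thm1}), then $\mathrm{cl}(\Gamma_i)$ and $\mathrm{cl}(\Gamma_j)$ share a vertex; taking $z=z'$ there shows $d(\theta)=0$ for \emph{every} $\theta$, so your claim that $d(\theta)>0$ in the open sector is false, your bound degenerates to pure polynomial growth, and neither the absolute convergence of the two ray integrals nor the vanishing of the connecting arcs follows. Even for non-adjacent edges your Jordan-type repair is insufficient: with $d(\theta)\geq c\,\mathrm{dist}(\theta,\{\text{endpoints}\})$, the large arc contributes
\[ \int_{\mathrm{arc}}|G|\,|\dd\lambda| \;\leq\; C R^{N}\int e^{-R\,d(\theta)}\,\dd\theta \;\leq\; C' R^{N-1}, \]
which does not tend to zero unless $N=0$, and $N$ is not at your disposal; the small arc fails in the same way with $R$ replaced by $\beta^2/\epsilon$.

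The missing idea --- and the reason the lemma is formulated for test functions $\varphi\in\mathcal{S}(0,1)$ --- is that the inner integral, not the exponential, must supply the decay. Since $\varphi$ and all its derivatives vanish to all orders at $\tau=0,1$, one may integrate by parts in $\tau$ as many times as one likes, each integration trading $\varphi$ for $\varphi'$ and producing an inverse power of the derivative of the phase; as in the paper's proof this yields, for every $m$,
\[ \left|\int_0^1 \psi_i^*\!\left(e^{\ii\lambda z-\ii\beta^2\bar{z}/\lambda}\right)\!\varphi(\tau)\,\dd\tau\right| \leq C_m\left(|\lambda|+\frac{\beta^2}{|\lambda|}\right)^{-m}\max_{\mathrm{cl}(\Gamma_i)}\left|e^{\ii\lambda z-\ii\beta^2\bar{z}/\lambda}\right|. \]
Multiplying by the seminorm bound \eqref{seminorms} for $\rho_j$, the integrand is then of size $(|\lambda|+\beta^2/|\lambda|)^{N-m}$ times the combined exponential, and your convexity argument is needed only in its weak form: the exponential has modulus at most $1$ on the \emph{closed} sector (i.e. $d(\theta)\geq 0$); no strict positivity is required. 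Choosing $m>N+1$ gives decay of arbitrarily high polynomial order as $|\lambda|\to 0$ and $|\lambda|\to\infty$, uniformly on the closed sector, which makes both arcs vanish and legitimises the deformation --- including for adjacent edges and on the bounding rays, where the exponential gives nothing. Your proposal never uses the vanishing of $\varphi$ at the endpoints, and without that ingredient the deformation cannot be justified.
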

\begin{proof}
In order to be able to deform contours in the complex plane, we need to establish some estimates for the integrand when $|\lambda|$ is either small or large (Cauchy's theorem will then provide the result since the integrand is analytic in any punctured disc centred at the origin). In this respect, for each $m\geq 0$ and some constant $C_m$ we have the following estimates:
\begin{align*} 
&\left| \int_0^1 \psi_i^*\!\left(  e^{\ii \lambda z - \ii \beta^2 \bar{z}/\lambda}\right)\!\varphi(\tau)\,\dd \tau\right| \\ &\qquad\leq C_m \left( |\lambda| + \frac{\beta^2}{|\lambda|}\right)^{-m} \left|\int_0^1 \psi_i^*\!\left(  e^{\ii \lambda z - \ii \beta^2 \bar{z}/\lambda}\right)\!\varphi^{(m)}(\tau)\,\dd \tau \right| \\
&\qquad\leq C_m \left( |\lambda| + \frac{\beta^2}{|\lambda|}\right)^{-m} \max_{\mathrm{cl}(\Gamma_i)} \left| e^{\ii \lambda z - \ii \beta^2 \bar{z}/\lambda}\right|.
\end{align*}
This estimate follows from repeated integration by parts, using the fact that $\varphi$ and its derivatives vanish to all orders at the boundary $\partial \Gamma_i$. The semi-norm estimates used in Lemma \ref{expconverge} imply that for $m$ and $C_m$ as above, we have
\begin{align*} & \left| \rho_j(\lambda) \int_0^1 \psi_i^*\!\left(  e^{\ii \lambda z - \ii \beta^2 \bar{z}/\lambda}\right)\!\varphi(\tau)\,\dd \tau\right| \\ &\qquad \leq C_m \left(|\lambda| + \frac{\beta^2}{|\lambda|}\right)^{-m} \max_{z\in \mathrm{cl}(\Gamma_i)} \max_{z'\in \mathrm{cl}(\Gamma_j)} \left| e^{\ii \lambda (z-z') - \ii \beta^2 (\bar{z}-\bar{z}')/\lambda} \right|.
\end{align*}
Observe that
\begin{equation} \left| e^{\ii \lambda (z-z') - \ii \beta^2 (\bar{z}-\bar{z}')/\lambda} \right| = \exp\left( -|z-z'| \left(|\lambda|+\frac{\beta^2}{|\lambda|}\right) \sin (\arg(\lambda(z-z')))\right). \label{expest}\end{equation}
If $|\alpha_i-\alpha_j|=\pi$, then $\ell_j$ and $\hat{\ell}_i$ coincide, thus there is nothing to prove. Let us suppose $0<\alpha_i - \alpha_j<\pi$. This condition together with convexity implies
\[ \alpha_i - \pi<\alpha_j < \arg(z-z') < \alpha_j + \pi.  \]
Hence, if $\lambda$ satisfies $\pi - \alpha_i \leq \arg \lambda \leq -\alpha_j$, we find
\[ 0 \leq \arg(\lambda(z-z'))\leq \pi. \]
It follows now from \eqref{expest} that the ray $\ell_j$ can be deformed onto $\hat{\ell}_i$. The case $0<\alpha_j-\alpha_i<\pi$ is similar.
\end{proof}

\begin{theorem}\label{thm1}
Define the function $Q=Q(z,\bar{z})$ by
\[ Q(z,\bar z) = \frac{1}{4\pi\ii} \sum_{i=1}^n \int_{\ell_i} e^{\ii \lambda z - \ii \beta^2 \bar{z}/\lambda} \rho_i(\lambda)\, \frac{\dd\lambda}{\lambda} \]
with $\{\rho_i(\lambda)\}_{i=1}^n$ defined in \eqref{rho1}, where $q_i,\partial_\mathbf{n} q_i \in \mathcal{E}'[0,1]$ for $i=1,\ldots, n$. If the global relation \eqref{gr1} is satisfied, then $Q$ solves the following boundary value problem:
\begin{subequations}\label{bvp'}
\begin{align} -\Delta Q + \beta^2 Q &=0 \,\quad \textrm{in $\Omega$} \\
\lim_{z\rightarrow \Gamma_i} Q &= q_i \quad \textrm{in $\mathcal{S}'(\Gamma_i)$ for $i=1,\ldots,n$.}
\end{align}
\end{subequations}
\end{theorem}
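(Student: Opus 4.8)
The plan is to verify the two assertions of \eqref{bvp'} separately, disposing of the interior equation first (which is routine) and then concentrating on the distributional boundary limit, which is the substance of the theorem.

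For the PDE, I would differentiate under the integral sign. The kernel $e^{\ii\lambda z-\ii\beta^2\bar z/\lambda}$ solves \eqref{basiceqn} for each $\lambda\neq0$ (this is the structural property underlying the representation \eqref{rep}), so it suffices to justify moving $-\Delta+\beta^2$ inside the sum of ray integrals defining $Q$. Each application of $\partial_z$ or $\partial_{\bar z}$ only introduces polynomial factors in $\lambda$ and $\lambda^{-1}$, which are dominated by the exponential decay of $\lambda^{-1}e^{\ii\lambda z-\ii\beta^2\bar z/\lambda}\rho_i(\lambda)$ furnished by Lemma \ref{expconverge}, uniformly for $z$ in compact subsets of $\Omega$. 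Hence the differentiated integrals converge uniformly and $(-\Delta+\beta^2)Q=0$ in $\Omega$.

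For the boundary limit, following the definition of convergence in $\mathcal S'(\Gamma_i)$, I would fix $\varphi\in\mathcal S(0,1)$ and evaluate $\lim_{\epsilon\downarrow0}\l Q_i^\epsilon,\varphi\r$. Substituting the definition of $Q$ and invoking the absolute convergence established after Lemma \ref{expconverge}, Fubini lets me write $\l Q_i^\epsilon,\varphi\r=\tfrac{1}{4\pi\ii}\sum_j\int_{\ell_j}\rho_j(\lambda)B_i^\epsilon(\lambda)\,\dd\lambda/\lambda$, where $B_i^\epsilon(\lambda)=\int_0^1(\psi_i^\epsilon)^*(e^{\ii\lambda z-\ii\beta^2\bar z/\lambda})\varphi\,\dd\tau$. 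Passing $\epsilon\downarrow0$ replaces $\psi_i^\epsilon$ by $\psi_i$, giving the limit $B_i$. I then split off the diagonal term $j=i$; for $j\neq i$ Lemma \ref{intlem} deforms each $\ell_j$ onto $\hat\ell_i$, and the global relation \eqref{gr1} collapses $\sum_{j\neq i}\rho_j=-\rho_i$. The diagonal $\int_{\ell_i}$ and the deformed $-\int_{\hat\ell_i}$ then fuse into a single integral of $\rho_i B_i$ over the full line $L_i=\{te^{-\ii\alpha_i}:t\in\mathbf{R}\}$ through the origin, convergent at both ends by Lemma \ref{expconverge}.

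The heart of the argument is the evaluation of this line integral. On $L_i$ the two exponential kernels combine, the vertex factors $e^{\pm(\ii\lambda z_i-\ii\beta^2\bar z_i/\lambda)}$ cancel, and with $\lambda=te^{-\ii\alpha_i}$ everything depends on $t$ only through $k:=|\Gamma_i|(t-\beta^2/t)$; moreover $\lambda e^{\ii\alpha_i}+\beta^2/(\lambda e^{\ii\alpha_i})=t+\beta^2/t$ and $\dd\lambda/\lambda=\dd t/t$. The integral becomes $\tfrac{|\Gamma_i|}{4\pi}\int_{\mathbf{R}}\{\widehat{\partial_\mathbf{n} q_i}(k)+(t+\beta^2/t)\hat q_i(k)\}\hat\varphi(-k)\,\dd t/t$. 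I would change variables $t\mapsto k$, using that $t\mapsto k$ is an increasing bijection of each of $(0,\infty)$ and $(-\infty,0)$ onto $\mathbf{R}$, with the crucial identity $\dd t/t=\dd k/(|\Gamma_i|(t+\beta^2/t))$. For the $\hat q_i$ term the factor $(t+\beta^2/t)$ cancels, so each branch contributes $\tfrac{1}{4\pi}\int\hat q_i(k)\hat\varphi(-k)\,\dd k$, and by the Fourier inversion formula $2\pi\varphi=((\hat{\varphi})\hat{\,})\check{\,}$ together with the Parseval identity $\l\hat u,\phi\r=\l u,\hat\phi\r$ the two copies sum to $\l q_i,\varphi\r$. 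For the $\widehat{\partial_\mathbf{n} q_i}$ term there is no such factor, so it carries the sign of $t+\beta^2/t$, which is opposite on the two branches $t>0$ and $t<0$; these two (individually convergent) contributions therefore cancel exactly. Hence $\lim_{\epsilon\downarrow0}\l Q_i^\epsilon,\varphi\r=\l q_i,\varphi\r$, which is the assertion.

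I expect the main obstacle to be rigour in the $\epsilon\downarrow0$ limit rather than the algebra: one must dominate $\rho_j(\lambda)B_i^\epsilon(\lambda)/\lambda$ by a fixed integrable function uniformly for all small $\epsilon$, so that dominated convergence legitimately replaces $\psi_i^\epsilon$ by $\psi_i$ before the contour deformation. This requires the exponential estimates of Lemma \ref{expconverge} and those in the proof of Lemma \ref{intlem} to hold uniformly as $\Gamma_i^\epsilon\to\Gamma_i$, which is plausible by convexity but must be checked with care near the vertices. The remaining delicate point is the bookkeeping of the orientations of $\ell_i$, $\hat\ell_i$ and of the two branches $t>0$, $t<0$, so that the Dirichlet term survives while the Neumann term cancels.
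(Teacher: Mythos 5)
Your proposal is correct and follows essentially the same route as the paper's proof: Fubini plus dominated convergence for the $\epsilon\downarrow 0$ limit, Lemma \ref{intlem} and the global relation \eqref{gr1} to collapse the off-diagonal terms onto $-\rho_i$ over $\hat{\ell}_i$, the change of variables $k=|\Gamma_i|(t-\beta^2/t)$ on the two half-lines (Neumann contributions cancelling, Dirichlet contributions doubling), and Fourier inversion in $\mathcal{S}'(\mathbf{R})$. The only cosmetic difference is that the paper fixes the gauge $z_i=0$, $\alpha_i=0$ before computing, whereas you keep general $\alpha_i$ and note that the vertex factors $e^{\pm(\ii\lambda z_i-\ii\beta^2\bar{z}_i/\lambda)}$ cancel.
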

\begin{proof}
It is straightforward to show that $Q$ satisfies the relevant equation at any interior point of $\Omega$, since exponential convergence justifies differentiating under the integral sign. Thus, we focus on the behaviour of $Q$ at the boundary. Without loss of generality we position the vertex $z_{i}$ at the origin and align the side $\Gamma_i$ with the positive real $z$-axis so that $\alpha_i=0$. Our aim is to compute the limit
\[ \lim_{\epsilon\rightarrow 0} \l Q_i^\epsilon, \varphi \r = \lim_{\epsilon\rightarrow 0} \int_0^1 \left(\psi_i^\epsilon\right)^*\!(Q)(\tau) \varphi (\tau)\, \dd \tau. \]
We have already established that the double integral converges absolutely, so interchanging the order of integration is justified. Thus, our task is to compute the limit
\[ \lim_{\epsilon\rightarrow 0}\frac{1}{4\pi\ii} \sum_{j=1}^n \int_{\ell_j} \rho_j(\lambda) \left[ \int_0^1 \left(\psi_i^\epsilon\right)^*\!\left(  e^{\ii \lambda z - \ii \beta^2 \bar{z}/\lambda}\right)\!\varphi(\tau)\,\dd \tau \right]\, \frac{\dd \lambda}{\lambda}. \]
It is simple to bound each of the integrands by an absolutely integrable function (independent of $\epsilon$), employing the same integration by parts type estimates used in Lemma \ref{intlem}. So the dominated convergence theorem applies and we can pass the limit through the integral signs. Hence,
\begin{align*}
 \lim_{\epsilon\rightarrow 0} \l Q_i^\epsilon, \varphi \r &= \frac{1}{4\pi\ii} \sum_{j=1}^n \int_{\ell_j} \rho_j(\lambda) \left[ \int_0^1 \psi_i^*\!\left(  e^{\ii \lambda z - \ii \beta^2 \bar{z}/\lambda}\right)\!\varphi(\tau)\,\dd \tau \right] \frac{\dd \lambda}{\lambda}  \\
 &= \frac{1}{4\pi\ii}\int_{\ell_i} \rho_i(\lambda) \left[ \int_0^1 \psi_i^*\!\left(  e^{\ii \lambda z - \ii \beta^2 \bar{z}/\lambda}\right)\!\varphi(\tau)\,\dd \tau \right] \frac{\dd \lambda}{\lambda} \\
 &\quad\quad + \frac{1}{4\pi\ii} \sum_{j\neq i} \int_{\ell_j} \rho_j(\lambda) \left[ \int_0^1 \psi_i^*\!\left(  e^{\ii \lambda z - \ii \beta^2 \bar{z}/\lambda}\right)\!\varphi(\tau)\,\dd \tau \right] \frac{\dd \lambda}{\lambda}.
\end{align*}
Using Lemma \ref{intlem} and the global relation \eqref{gr1} we find
\begin{align*}
& \frac{1}{4\pi\ii} \sum_{j\neq i} \int_{\ell_j} \rho_j(\lambda) \left[ \int_0^1 \psi_i^*\!\left(  e^{\ii \lambda z - \ii \beta^2 \bar{z}/\lambda}\right)\!\varphi(\tau)\,\dd \tau \right] \frac{\dd \lambda}{\lambda} \\
&\qquad\qquad =  \frac{1}{4\pi\ii}  \int_{\hat{\ell}_i} \Bigg( \sum_{j\neq i} \rho_j(\lambda) \Bigg)\left[ \int_0^1 \psi_i^*\!\left(  e^{\ii \lambda z - \ii \beta^2 \bar{z}/\lambda}\right)\!\varphi(\tau)\,\dd \tau \right] \frac{\dd \lambda}{\lambda}\\
 &\qquad\qquad = - \frac{1}{4\pi\ii}\int_{\hat{\ell}_i} \rho_i(\lambda) \left[ \int_0^1 \psi_i^*\!\left(  e^{\ii \lambda z - \ii \beta^2 \bar{z}/\lambda}\right)\!\varphi(\tau)\,\dd \tau \right] \frac{\dd \lambda}{\lambda}.
\end{align*}
Hence
\begin{equation} \lim_{\epsilon\rightarrow 0}\l Q_i^\epsilon, \varphi \r = \frac{1}{4\pi \ii} \left( \int_{\ell_i}- \int_{\hat{\ell}_i}\right) \rho_i(\lambda) \left[ \int_0^1 \psi_i^*\!\left(  e^{\ii \lambda z - \ii \beta^2 \bar{z}/\lambda}\right)\!\varphi(\tau)\,\dd \tau\right]\, \frac{\dd \lambda}{\lambda}. \label{weakok} \end{equation}
Since $\alpha_i=0$, we can parametrise the $\lambda$-integrals in \eqref{weakok} so that $\ell_i$ is the positive real axis and $\hat{\ell_i}$ is the negative real axis:
\begin{align*}   &4\pi \ii\,\lim_{\epsilon\rightarrow 0} \l Q_i^\epsilon,\varphi \r  \\ &= \int_{0}^{\infty}  \hat{\varphi}\left(|\Gamma_i|\Big(\tfrac{\beta^2}{\lambda}-\lambda\Big)\right) \rho_i(\lambda)\, \frac{\dd\lambda}{\lambda} -  \int_0^{-\infty}  \hat{\varphi}\left(|\Gamma_i|\Big(\tfrac{\beta^2}{\lambda}-\lambda\Big)\right) \rho_i(\lambda)\, \frac{\dd\lambda}{\lambda} \\
&= \int_{0}^{\infty}  \hat{\varphi}\left(|\Gamma_i|\Big(\tfrac{\beta^2}{\lambda}-\lambda\Big)\right) \rho_i(\lambda)\, \frac{\dd\lambda}{\lambda} - \int_{0}^{\infty}  \hat{\varphi}\left(|\Gamma_i|\Big(\lambda -\tfrac{\beta^2}{\lambda}\Big)\right) \rho_i(-\lambda)\, \frac{\dd\lambda}{\lambda}.
\end{align*}
In the first integral we make the change of variables $k = |\Gamma_i|(\lambda-\beta^2/\lambda)$, i.e.
\[ 2\lambda =\frac{1}{|\Gamma_i|}\left( k+\sqrt{k^2+4|\Gamma_i|^2\beta^2}\right), \quad \frac{\dd \lambda}{\lambda} = \frac{\dd k}{\sqrt{k^2+4|\Gamma_i|^2\beta^2}} , \]
where we have chosen the positive square root so that $\lambda\geq 0$. Similarly, in the second integral we make the change of variables $k=-|\Gamma_i|(\lambda-\beta^2/\lambda)$, i.e.
\[ 2\lambda = \frac{1}{|\Gamma_i|}\left(-k+\sqrt{k^2+4|\Gamma_i|^2\beta^2}\right), \quad \frac{\dd\lambda}{\lambda} = -\frac{\dd k}{\sqrt{k^2+4|\Gamma_i|^2\beta^2}}. \]
Hence
\begin{align}  4\pi \ii\, \lim_{\epsilon\rightarrow 0} \l Q_i^\epsilon, \varphi \r &= \int_{-\infty}^{\infty}  \hat{\varphi}(-k)\left( \frac{ \ii |\Gamma_i| (\partial_\mathbf{n} q_i)\hat{\,}(k)}{\sqrt{k^2+4|\Gamma_i|^2\beta^2}} + \ii  \hat{q}_i(k)\right)\dd k\nonumber \\
 &\quad\quad -  \int_{-\infty}^{\infty}  \hat{\varphi}(-k)\left( \frac{ \ii |\Gamma_i| (\partial_\mathbf{n} q_i)\hat{\,}(k)}{\sqrt{k^2+4|\Gamma_i|^2\beta^2}} - \ii \hat{q}_i(k)\right)\dd k .\label{weak}
\end{align}
We note that the above integrals are well defined for $q_i, \partial_\mathbf{n} q_i \in \mathcal{E}'[0,1]$, since by the semi-norm estimates \eqref{seminorms}, the corresponding Fourier transforms are entire functions of polynomial growth on the real line, . We find the contributions from the derivatives cancel, and we are left with the expression
\begin{equation}  \lim_{\epsilon\rightarrow 0} \l Q_i^\epsilon, \varphi \r =  \frac{1}{2\pi} \int_{-\infty}^\infty  \hat{\varphi}(-k) \hat{q}_i(k)\, \dd k. \label{almost} \end{equation}
Now $\hat{\varphi}\in \mathcal{S}(\mathbf{R})$ and since $q_i \in \mathcal{E}'[0,1] \hookrightarrow \mathcal{S}'(\mathbf{R})$, we can apply the Fourier inversion theorem on $\mathcal{S}'(\mathbf{R})$ to find
\[  \lim_{\epsilon\rightarrow 0} \l Q_i^\epsilon, \varphi \r = \frac{1}{2\pi}\l \hat{q}_i, (\hat{\varphi})\check{\,}\,\r = \l q_i, \varphi \r. \]
Hence $\lim_{z\rightarrow \Gamma_i} Q = q_i$ in $\mathcal{S}'(\Gamma_i)$.
\end{proof}
\begin{remark}
By performing a similar calculation it is possible to determine the limit of $\partial Q/\partial\mathbf{n}$, where $\mathbf{n}$ is the normal to $\Gamma_i^\epsilon \simeq \Gamma_i$. In this case the corresponding terms for $\hat{q}_i$ in \eqref{weak} cancel and we find
\[  \lim_{\epsilon\rightarrow 0} \l (\partial Q/\partial \mathbf{n})_i^\epsilon, \varphi \r =  \l \partial_\mathbf{n} q_i, \varphi \r. \]
Thus, under the assumption that the global relation is satisfied, it is possible by using \eqref{rep}, to prove existence for the Dirichlet, the Neumann or the Robin boundary value problems.
\end{remark}

\begin{remark}
Central to this argument was the statement that our domain can be placed in the $z$-plane so that the vertex at $z=z_i$ lies at the origin. We certainly do not lose any generality by enforcing this situation. Indeed, suppose a priori that one is interested in the behaviour of the boundary values at a particular edge of the domain boundary. Then, \emph{after} this edge has been singled out, the domain can be embedded into $\mathbf{C}\simeq \mathbf{R}^2$ in any way we find convenient. Alternatively, one can check directly that the integral representation \eqref{rep} is invariant under rotations and translations in the $z$-plane.

This highlights an important advantage of the unified method introduced in \cite{fokas1997unified,fokas2000integrability}: in all boundary value problems there is an underlying gauge freedom, namely, the domain on which the boundary value problem is defined may be embedded into $\mathbf{R}^n$ a variety of ways. Clearly, the particular choice of gauge does not affect the analysis of the given boundary value problem, since the solution at a point $\mathbf{x}\in\Omega$ is determined by the boundary data prescribed on $\partial\Omega$, and the position of the point $\mathbf{x}$ \emph{relative to the boundary}.

In the classical approaches (Green's functions, boundary integral equations, etc.), the analysis is carried out in the \emph{physical} space. In this setting, gauge freedom gives no particular advantage (the action of a gauge transformation simply changes all variables appearing in the equations). However, in the unified approach the analysis is done entirely in the \emph{spectral} space. The spectral parameter is decoupled from the physical parameter, and now a choice of gauge \emph{does} affect the analysis of the spectral problem. In particular, as we have seen in this section, a choice of gauge allows one to single out a particular neighbourhood of the boundary $\partial\Omega$. Since the underlying problem is gauge invariant, these local arguments can be extended to all of $\partial\Omega$.

From the above discussion, it follows that the ``without loss of generality'' statements made in this section can be interpreted as an exploitation of gauge freedom: we have made a particular choice of gauge so that the analysis of the boundary values on part of our domain is simple, then we have extended these results to the entire domain using the underlying gauge freedom. 
\end{remark}

\section{Regularity of Boundary Data}
In this section we use the global relation \eqref{gr1} to derive regularity results for the unknown Neumann data. On the \emph{assumption} of a distributional solution to the global relation, we use elementary results from the theory of distributions to retrieve the classical regularity results for the Dirichlet-Neumman map for elliptic problems. The basic result is summarised below
\begin{theorem}\label{regthm}
Suppose the distributions $q_i,\partial_\mathbf{n}q_i \in \mathcal{E}'[0,1]$ ($i=1,\ldots,n$) are such that the global relation \eqref{gr1} is satisfied:
\begin{equation} \sum_{i=1}^n  \rho_i(\lambda) =0 \label{grdist} \end{equation}
where
\begin{align*}
\frac{\rho_i(\lambda)}{ |\Gamma_i|} &= \left\langle \partial_\mathbf{n} q_i, \psi_i^*\!\left(e^{-\ii\lambda z' + \ii \beta^2 \bar{z}'/\lambda}\right)\right\rangle \\ 
& \quad + \left(\lambda e^{\ii\alpha_i} + \frac{\beta^2}{\lambda e^{\ii\alpha_i} }\right) \left \langle  q_i, \psi_i^*\!\left(e^{-\ii\lambda z' + \ii \beta^2 \bar{z}'/\lambda}\right)\right\rangle.  \end{align*}
Suppose that the natural extension of $u_i$ belongs to $H^s(\mathbf{R})$ for some $1\leq i\leq n$ and $s\in \mathbf{R}$. Then $\partial_\mathbf{n} q_i \in H^{s-1}_{\mathrm{loc}}(0,1)$.
\end{theorem}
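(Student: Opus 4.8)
\section*{Proof proposal}

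The plan is to analyse the global relation \eqref{grdist} on a single ray, after exploiting the gauge freedom to place the edge of interest conveniently. First I would use the rotation/translation invariance noted after Theorem \ref{thm1} to position $z_i$ at the origin with $\alpha_i=0$, so that $\ell_i$ is the positive real axis and $\hat{\ell}_i$ the negative real axis. With this normalisation the substitution $k=|\Gamma_i|(\lambda-\beta^2/\lambda)$ used in the proof of Theorem \ref{thm1} turns the pairings in \eqref{rho1} into honest Fourier transforms, giving on $\hat{\ell}_i$ the identity
\[ \frac{\rho_i(\lambda_-(k))}{|\Gamma_i|} = (\partial_\mathbf{n} q_i)\hat{\,}(k) - A(k)\,\hat{q}_i(k), \qquad A(k) = \frac{1}{|\Gamma_i|}\sqrt{k^2+4|\Gamma_i|^2\beta^2}, \]
where $\lambda_-(k)<0$ is the negative root of $|\Gamma_i|\lambda^2-k\lambda-|\Gamma_i|\beta^2=0$ and $k$ ranges over all of $\mathbf{R}$ as $\lambda$ runs along $\hat{\ell}_i$. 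Solving for the unknown Neumann datum gives the basic formula
\[ (\partial_\mathbf{n} q_i)\hat{\,}(k) = A(k)\,\hat{q}_i(k) + \frac{\rho_i(\lambda_-(k))}{|\Gamma_i|}. \]

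The symbol $A(k)$ is elliptic of order one, so the first term is $\Lambda q_i$ for a first--order positive Fourier multiplier $\Lambda$; since $q_i\in H^s(\mathbf{R})$ by hypothesis, this term lies in $H^{s-1}(\mathbf{R})$ and already accounts for the expected one--derivative loss of the Dirichlet--to--Neumann map. It therefore remains to control the second term, i.e. the restriction of $\rho_i$ to $\hat{\ell}_i$, which is where the global relation enters. Writing $\rho_i=-\sum_{j\neq i}\rho_j$ on $\hat{\ell}_i$, I would estimate each $\rho_j$ exactly as in Lemma \ref{intlem}: on the negative real axis the real part of the exponent in \eqref{rho1} equals $-\Im(z')(|\lambda|+\beta^2/|\lambda|)$, and convexity forces $\Im(z')\geq 0$ for every $z'\in\Gamma_j$ with $j\neq i$. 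For the edges $\Gamma_j$ that do not share a vertex with $\Gamma_i$ one has $\Im(z')\geq c>0$, so those $\rho_j$ decay exponentially and contribute a Schwartz function of $k$, whose inverse transform is smooth on all of $\mathbf{R}$.

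The main obstacle is the pair of edges $\Gamma_{i\pm1}$ adjacent to $\Gamma_i$: there $\Im(z')$ vanishes at the shared vertex, so the corresponding spectral functions need not decay and are only polynomially bounded on $\hat{\ell}_i$. The point I would have to establish is that, although these terms do not decay, their contribution to $\partial_\mathbf{n} q_i$ is \emph{smooth in the interior} $(0,1)$. Quantitatively, as $|k|\to\infty$ the test function $\psi_{i\pm1}^*(e^{-\ii\lambda z'+\ii\beta^2\bar{z}'/\lambda})$ concentrates exponentially at the shared vertex (the real part of the exponent attains its maximum, zero, there), so the pairing against $\partial_\mathbf{n} q_{i\pm1}$ is governed by the germ of the data at that vertex and, after the substitution, carries the pure oscillation $e^{-\ii k\tau_0}$ with $\tau_0\in\{0,1\}$ the vertex position in the $\Gamma_i$--parametrisation. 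Such a factor corresponds under inverse Fourier transform to a distribution with singular support contained in $\{0,1\}$. Hence the inverse transform of $\rho_i(\lambda_-(\cdot))$ is smooth on $(0,1)$, and combining this with $\Lambda q_i\in H^{s-1}(\mathbf{R})$ yields $\partial_\mathbf{n} q_i\in H^{s-1}_{\mathrm{loc}}(0,1)$. Making this concentration-at-the-vertex estimate precise --- extracting the leading oscillatory symbol uniformly in $k$ and showing the remainder is smooth --- is the step I expect to require the most care; it is exactly this vertex behaviour that prevents the conclusion from holding globally up to the endpoints, and which is responsible for the possible corner singularities discussed elsewhere in the paper.
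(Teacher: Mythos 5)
Your route is, in outline, the paper's own: the same gauge normalisation (Lemma \ref{distlem} places $\Gamma_i$ on the real axis with the rest of the polygon strictly on one side), the same conversion of $\rho_i$ on the distinguished ray into the Fourier identity \eqref{fouriernote} via $k=\lambda-\beta^2/\lambda$, the same use of the global relation to write $\rho_i=-\sum_{j\neq i}\rho_j$, the same exponential estimate disposing of the non-adjacent edges, and the same final step in which the order-one elliptic multiplier $\sqrt{k^2+4\beta^2}$ acting on $\hat{q}_i$ produces the one-derivative loss (the paper runs this last step after localising by $\varphi\in C^\infty_c(0,1)$, using Peetre's and Young's inequalities, but your direct multiplier statement is an acceptable variant). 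You have also correctly located the crux: the adjacent edges $\Gamma_{i\pm1}$, whose spectral functions are only polynomially bounded on the ray.

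At that crux, however, your proposal contains a genuine gap rather than a proof. The assertion that the adjacent-edge contributions have inverse Fourier transform smooth on $(0,1)$ is supported only by a heuristic (``concentration at the vertex'', a pure oscillation $e^{-\ii k\tau_0}$, singular support in $\{0,1\}$), and as stated this is not an argument that covers general data $q_{i\pm1},\partial_\mathbf{n}q_{i\pm1}\in\mathcal{E}'[0,1]$: the pairing defining $\rho_{i+1}$ is not a polynomial symbol times an oscillation (already for data given by a bounded function on $\Gamma_{i+1}$ one gets symbols with nontrivial asymptotic expansions, and for genuinely distributional data one cannot even write the pairing as an integral without further input). Extracting a symbol ``uniformly in $k$ and showing the remainder is smooth'' is exactly the missing proof, and you flag it yourself. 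The paper fills this gap with a dedicated lemma whose mechanism you would need to reproduce: it localises \emph{first}, forming $\hat{\varphi}\star\rho_{i+1}$ with $\varphi\in C^\infty_c(0,1)$; it then invokes the representation theorem proved in the appendix (every element of $\mathcal{E}'[0,1]$ is a finite sum of derivatives of bounded functions) to reduce $\rho_{i+1}$ to finitely many terms of the form $\lambda^\beta\int_0^1 f(\tau)\,e^{-\ii\lambda\tau z_{i+2}+\ii\beta^2\tau\bar{z}_{i+2}/\lambda}\,\dd\tau$ with $f$ bounded; after a regularisation and Fubini, $\hat{\varphi}\star\rho_{i+1}$ becomes the Fourier transform of $\varphi F$, and a contour rotation in the $\mu$-integral shows $F$ is smooth on $(0,1)$ (its only singularity sits at the endpoint corresponding to the shared vertex), so $\varphi F\in C^\infty_c(0,1)$ and $\hat{\varphi}\star\rho_{i+1}\in\mathcal{S}(\mathbf{R})$. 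Some such device --- the structure theorem plus the interchange-and-rotate computation, or an equivalent --- is indispensable to turn your singular-support claim into a proof; without it the theorem's conclusion is asserted at precisely the point where all the work lies.
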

\begin{figure}\label{Regdiagram}
\begin{center}
\includegraphics[scale=1.2]{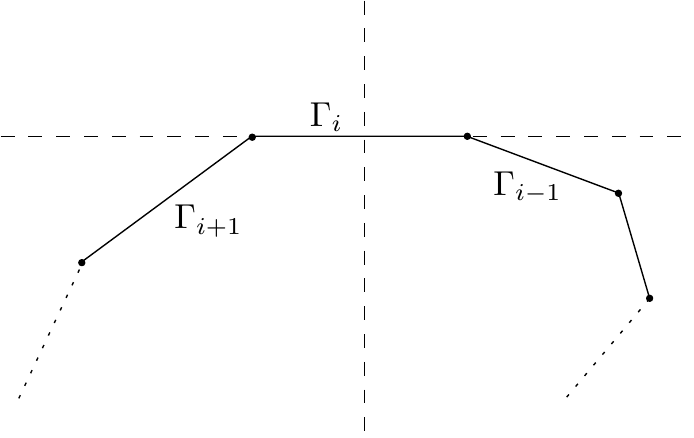}
\end{center}
\caption{Choice of gauge so that $\Gamma_i$ is aligned with the real $z$-axis.}\label{gaugechoice}
\end{figure}
The main idea in the proof is to use the gauge freedom to align the particular side we are interested in with the real $z$-axis, so that all other sides of the polygon are contained in the lower half plane, as in Figure \ref{gaugechoice}. The term in the global relation corresponding to the side aligned with the real $z$-axis, is closely related to the Fourier transform of the boundary values on this side. By proving that the other terms in the global relation decay in a suitable sense, the regularity of the unknown boundary values on the side aligned with the real $z$-axis can be deduced from the regularity of the known boundary data using standard properties of the Fourier transform and Sobolev spaces. The proof of the theorem requires two technical lemmas, which show that if we are interested in the regularity of the boundary values on the side $\Gamma_i$, the only important term in the global relation is $\rho_i$.
\begin{lemma}\label{distlem}
Let $\Omega$ be scaled and positioned so that it resides in the lower half $z$-plane and the side $\Gamma_i$ coincides with the interval $(0,1)$ on the real axis. Then
\begin{equation} \rho_i(\lambda) = (\partial_\mathbf{n} q_i)\hat{\,} (\lambda -\beta^2/\lambda) - \left( \lambda + \frac{\beta^2}{\lambda}\right) \hat{q}_i (\lambda-\beta^2/\lambda). \label{fouriernote}\end{equation}
Then the following estimate is valid for $\lambda >0$:
\begin{equation} \left| \rho_{i-1}(\lambda) + \rho_i(\lambda) + \rho_{i+1}(\lambda)\right| \leq C\left(\lambda+\frac{\beta^2}{\lambda}\right)^M e^{-\epsilon (\lambda + \beta^2/\lambda)} , \label{fourierestimate}\end{equation}
for some fixed $\epsilon>0$ and constants $C,M>0$.
\end{lemma}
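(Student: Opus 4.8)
The plan is to treat the two assertions separately: the identity \eqref{fouriernote} is a direct substitution, whereas the estimate \eqref{fourierestimate} is where the geometry does the work. For \eqref{fouriernote} I would insert the gauge normalization into the definition of $\rho_i$ recorded in Theorem \ref{regthm}. With $\Gamma_i$ coinciding with $(0,1)$ we have $|\Gamma_i|=1$, the parametrization $\psi_i$ sends $\tau$ to a real point of the segment, and there $z'=\bar{z}'$, so the pullback of the exponential collapses to a single exponential in $\tau$,
\[ \psi_i^*\!\left(e^{-\ii\lambda z' + \ii\beta^2\bar{z}'/\lambda}\right)(\tau) = e^{-\ii(\lambda - \beta^2/\lambda)\tau}. \]
Writing $k=\lambda-\beta^2/\lambda$ and recalling the definition $\hat{u}(k)=\l u_\tau, e^{-\ii k\tau}\r$ of the Fourier transform on $\mathcal{E}'[0,1]$, the two pairings in $\rho_i$ become exactly $(\partial_\mathbf{n}q_i)\hat{\,}(k)$ and $\hat{q}_i(k)$, while the prefactor $\lambda e^{\ii\alpha_i}+\beta^2/(\lambda e^{\ii\alpha_i})$ reduces to $\pm(\lambda+\beta^2/\lambda)$ once the gauge-fixed value of $\alpha_i$ is inserted, the sign being pinned by the orientation convention so as to reproduce \eqref{fouriernote}.

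The substance is \eqref{fourierestimate}, and the key idea is to \emph{avoid} bounding the three central terms directly: $\rho_i$ itself only has polynomial growth, and the edges $\Gamma_{i-1},\Gamma_{i+1}$ adjacent to $\Gamma_i$ meet the real axis at the shared vertices, so none of the three decays. Instead I would invoke the global relation \eqref{grdist} to rewrite
\[ \rho_{i-1}(\lambda) + \rho_i(\lambda) + \rho_{i+1}(\lambda) = -\sum_{j \notin \{i-1,i,i+1\}} \rho_j(\lambda), \]
trading the non-decaying terms for the complementary sum over the edges \emph{not} adjacent to $\Gamma_i$. (When $n=3$ the right-hand sum is empty and the estimate is trivial.)

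It then remains to show each surviving term decays exponentially. Arguing exactly as in Lemma \ref{expconverge}, the seminorm bound \eqref{seminorms} for $q_j,\partial_\mathbf{n}q_j$ gives, for $\lambda>0$,
\[ |\rho_j(\lambda)| \leq C_j \sum_{m=0}^{N_j}\left(\lambda + \frac{\beta^2}{\lambda}\right)^m \max_{\mathrm{cl}(\Gamma_j)}\left|e^{-\ii\lambda z' + \ii\beta^2\bar{z}'/\lambda}\right|, \]
and the modulus computation in \eqref{expest} specializes (for real $\lambda>0$) to
\[ \max_{\mathrm{cl}(\Gamma_j)}\left|e^{-\ii\lambda z' + \ii\beta^2\bar{z}'/\lambda}\right| = \exp\!\left(\left(\lambda + \frac{\beta^2}{\lambda}\right)\max_{z'\in\mathrm{cl}(\Gamma_j)}\Im z'\right). \]
The crucial geometric input, which I would isolate as the main obstacle, is that for every $j$ not adjacent to $i$ one has $\max_{z'\in\mathrm{cl}(\Gamma_j)}\Im z' = -\delta_j$ for some $\delta_j>0$: since $\Omega$ lies in the closed lower half-plane with the real axis meeting $\partial\Omega$ only along $[0,1]=\mathrm{cl}(\Gamma_i)$, convexity forces the closure of any non-adjacent edge to remain strictly below the axis, the adjacent edges being excluded precisely because they reach the axis at a shared vertex. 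Absorbing the polynomial factor and setting $\epsilon=\min_j\delta_j>0$ and $M=\max_j N_j$ over the finitely many non-adjacent $j$, the triangle inequality then yields \eqref{fourierestimate}. The only genuine care needed is this convexity argument establishing the uniform gap $\delta_j>0$, together with the bookkeeping that correctly separates adjacent from non-adjacent edges.
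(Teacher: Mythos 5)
Your proposal is correct and follows essentially the same route as the paper: the identity \eqref{fouriernote} by direct substitution of the gauge-fixed parametrisation into the definition of $\rho_i$ (the paper pins the sign via $\alpha_i=\pi$), and the estimate \eqref{fourierestimate} by using the global relation \eqref{grdist} to trade the three non-decaying central terms for the non-adjacent ones, each of which is then bounded by the semi-norm estimate \eqref{seminorms} together with the fact that $\Im z' \leq -\epsilon$ on the closure of every non-adjacent edge. Your explicit isolation of the convexity argument for the uniform gap $\delta_j>0$ is a slightly fuller account of what the paper asserts in one clause, but it is the same proof.
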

\begin{proof}
The equality in \eqref{fouriernote} follows immediately from the fact that $\Gamma_i$ coincides with $(0,1)$ and $\alpha_i=\pi$. If $j\notin \{i,i\pm 1\}$, then the basic semi-norm estimate \eqref{seminorms} gives
\[ |\rho_j(\lambda)| \leq  C\left(\lambda+ \frac{\beta^2}{\lambda}\right)^N \max_{z\in\mathrm{cl}(\Gamma_j)}e^{(\lambda+\beta^2/\lambda)\Im z} \leq  C\left(\lambda+ \frac{\beta^2}{\lambda}\right)^N e^{-\epsilon(\lambda+\beta^2/\lambda)}\]
since $\Im z' \leq -\epsilon$ on $\Gamma_j$ for some $\epsilon>0$, using the fact that all the sides lie lower half plane. The estimate in \eqref{fourierestimate} follows from the global relation \eqref{grdist}
\[ \rho_{i-1}(\lambda) + \rho_i(\lambda) + \rho_{i+1}(\lambda) = -\sum_{j\neq i,i\pm 1} \rho_j(\lambda)  \]
and the previous estimate for the terms on the right hand side.
\end{proof}
\begin{lemma}
For $\varphi \in C^\infty_c(0,1)$ we define the operation $\star: \mathcal{S}(\mathbf{R})\times C^\infty(\mathbf{R}^+) \rightarrow C^\infty (\mathbf{R})$
\[ \hat{\varphi} \star \rho (\lambda) = \int_0^\infty \hat{\varphi}\left(\lambda  - \mu + \tfrac{\beta^2}{\mu}\right) \rho(\mu)\, \dd M(\mu), \]
where $\dd M(\mu) =(1+\beta^2/\mu^2)\dd \mu$. Then $\hat{\varphi} \star \rho_i \in \mathcal{S}(\mathbf{R})$.
\end{lemma}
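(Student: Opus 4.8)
The plan is to recognise $\star$ as an ordinary convolution after a change of variables, and then to reduce the claim to the elementary fact that $\mathcal{S}(\mathbf{R})\ast\mathcal{S}(\mathbf{R})\subset\mathcal{S}(\mathbf{R})$. First I would substitute $k=\mu-\beta^2/\mu$. This is a smooth, strictly increasing bijection of $(0,\infty)$ onto $\mathbf{R}$ whose Jacobian is exactly the measure, $\dd k=(1+\beta^2/\mu^2)\,\dd\mu=\dd M(\mu)$, and which satisfies $\mu+\beta^2/\mu=\sqrt{k^2+4\beta^2}$ together with $\lambda-\mu+\beta^2/\mu=\lambda-k$. Writing $\mu=\mu(k)$ for the inverse and $g_i(k)=\rho_i(\mu(k))$, the definition collapses to
\[ \hat{\varphi}\star\rho_i(\lambda)=\int_{-\infty}^{\infty}\hat{\varphi}(\lambda-k)\,g_i(k)\,\dd k=(\hat{\varphi}\ast g_i)(\lambda). \]
Since $\varphi\in C^\infty_c(0,1)$ its transform $\hat{\varphi}$ lies in $\mathcal{S}(\mathbf{R})$ by Paley--Wiener, so everything reduces to showing $g_i\in\mathcal{S}(\mathbf{R})$.

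The substance of the proof is therefore to show that $g_i$ is Schwartz. Smoothness is immediate, since $\rho_i$ is smooth on $\mathbf{R}^{+}$ and $\mu(\cdot)$ is a smooth diffeomorphism. For the decay I would use the exponential bound for the spectral functions along the ray established in the proof of Lemma \ref{distlem}, of the form $(\mu+\beta^2/\mu)^{M}e^{-\epsilon(\mu+\beta^2/\mu)}$, which holds because the edges separated from the support of $\varphi$ lie strictly in the lower half $z$-plane (cf.\ the estimate $e^{(\mu+\beta^2/\mu)\Im z'}\le e^{-\epsilon(\mu+\beta^2/\mu)}$ used there, and the resulting bound \eqref{fourierestimate}). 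Under $\mu+\beta^2/\mu=\sqrt{k^2+4\beta^2}\ge|k|$ this becomes $|g_i(k)|\le C(k^2+4\beta^2)^{M/2}e^{-\epsilon\sqrt{k^2+4\beta^2}}\le C'e^{-\tfrac{\epsilon}{2}|k|}$, i.e.\ exponential decay. To control the full Schwartz semi-norms I also need every derivative of $g_i$ to decay: by the chain rule $g_i^{(n)}(k)$ is a combination of $\rho_i^{(j)}(\mu(k))$ weighted by derivatives of $\mu(k)$, and since $\mu'(k)=\mu^2/(\mu^2+\beta^2)\in(0,1]$ with all higher derivatives bounded, it is enough that the $\rho_i^{(j)}$ inherit the same exponential decay. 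This follows by differentiating the spectral functions in $\lambda$, which produces only polynomial factors in $z',\bar{z}'$ and in $\lambda$ while preserving the exponential factor. Hence $g_i\in\mathcal{S}(\mathbf{R})$.

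With $\hat{\varphi},g_i\in\mathcal{S}(\mathbf{R})$ the conclusion $\hat{\varphi}\ast g_i\in\mathcal{S}(\mathbf{R})$ is standard, but I would record the quantitative estimate. To bound $\sup_\lambda|\lambda^{a}\partial_\lambda^{b}(\hat{\varphi}\ast g_i)(\lambda)|$, differentiation falls on $\hat{\varphi}$, so $\partial_\lambda^{b}(\hat{\varphi}\ast g_i)=\hat{\varphi}^{(b)}\ast g_i$, and I would expand $\lambda^{a}=((\lambda-k)+k)^{a}$ binomially under the integral. Each resulting term pairs the rapidly decaying weight $(\lambda-k)^{a-c}\hat{\varphi}^{(b)}(\lambda-k)$ against the integrable function $k^{c}g_i(k)$, giving a bound uniform in $\lambda$. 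This verifies every semi-norm and hence $\hat{\varphi}\star\rho_i\in\mathcal{S}(\mathbf{R})$.

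The only genuine difficulty is the decay step: one must upgrade the zeroth-order exponential estimate of Lemma \ref{distlem} to all $\lambda$-derivatives of $\rho_i$, and then transport these bounds through the change of variables across its two singular ends --- $\mu\to0^{+}$, where $k\to-\infty$, $\dd M$ blows up and $\mu(k)\sim-\beta^2/k$, and $\mu\to\infty$, where $k\to+\infty$ and $\mu(k)\sim k$. Fortunately the map is contracting at the left end and asymptotically linear at the right end, so the derivative weights $\mu^{(j)}(k)$ cause no harm; the essential input is the exponential decay of the spectral functions, which rests entirely on the remaining edges of the polygon lying strictly in the lower half-plane. Everything else is bookkeeping with Schwartz semi-norms.
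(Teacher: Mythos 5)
Your reduction of $\star$ to an honest convolution via $k=\mu-\beta^2/\mu$ is correct (the paper performs the same substitution, though only later, in the proof of Theorem \ref{regthm}), but the central claim of your proof --- that $g_i(k)=\rho_i(\mu(k))$ is a Schwartz function --- is false, and it is false precisely because you have misread where the exponential estimate of Lemma \ref{distlem} applies. That estimate, $|\rho_j(\lambda)|\leq C(\lambda+\beta^2/\lambda)^N e^{-\epsilon(\lambda+\beta^2/\lambda)}$, holds only for the \emph{far} edges $j\notin\{i,i\pm 1\}$, which lie strictly in the lower half-plane; the edge $\Gamma_i$ itself lies \emph{on} the real axis, and the adjacent edges $\Gamma_{i\pm1}$ touch it at the corners, so none of $\rho_{i-1},\rho_i,\rho_{i+1}$ individually has any decay (only their sum does, via the global relation --- that is the content of \eqref{fourierestimate}). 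Concretely, by \eqref{fouriernote} one has $\rho_i(\mu(k))=(\partial_\mathbf{n}q_i)\hat{\,}(k)-\sqrt{k^2+4\beta^2}\,\hat{q}_i(k)$, a combination of Fourier transforms of compactly supported distributions: entire and polynomially bounded, but with no decay whatsoever. Take $q_i=0$ and $\partial_\mathbf{n}q_i=\delta_{1/2}$; then $g_i(k)=e^{-\ii k/2}$ and
\begin{equation*}
(\hat{\varphi}*g_i)(\lambda)=\int_{-\infty}^{\infty}\hat{\varphi}(\lambda-k)e^{-\ii k/2}\,\dd k = 2\pi\varphi(\tfrac{1}{2})\,e^{-\ii\lambda/2},
\end{equation*}
which is not Schwartz whenever $\varphi(\tfrac12)\neq 0$. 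The red flag is that your argument never invokes the global relation \eqref{grdist}; if it were valid it would prove the lemma for arbitrary boundary distributions, and the example above shows that statement is false. Indeed, if $\rho_i$ itself were rapidly decreasing, Theorem \ref{regthm} would be trivial, and the Dirichlet--Neumann map would be smoothing, which it is not.

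The paper's actual mechanism is quite different and cannot be bypassed. First, the global relation converts the problem: $\hat{\varphi}\star\rho_i=-\hat{\varphi}\star\rho_{i-1}-\hat{\varphi}\star\rho_{i+1}$ modulo $\mathcal{S}(\mathbf{R})$, where the Schwartz error collects the far edges (there your exponential-decay reasoning is legitimate). The remaining difficulty is the two \emph{adjacent} edges, which also lack exponential decay because they meet $\Gamma_i$ at the corners. For these the paper uses the structure theorem for $\mathcal{E}'[0,1]$ (the appendix) to write $\rho_{i+1}$ as a finite sum of terms $\mu^{\beta}\int_0^1 f(\tau)e^{-\ii\mu\tau z_{i+2}+\ii\beta^2\tau\bar{z}_{i+2}/\mu}\,\dd\tau$ with $f$ bounded, then applies Fubini and a contour rotation in the $\mu$-integral to exhibit $\hat{\varphi}\star\rho_{i+1}$ as the Fourier transform of $\varphi F$, where $F$ is smooth on the open interval $(0,1)$ but singular at the endpoints. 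It is exactly here that the hypothesis $\varphi\in C^\infty_c(0,1)$ earns its keep: the compact support of $\varphi$ away from the corners kills the endpoint singularities of $F$, so $\varphi F\in C^\infty_c(0,1)$ and its transform is Schwartz. Your proposal is missing both of these ideas --- the use of the global relation to trade $\rho_i$ for the other edges, and the corner analysis for the adjacent edges --- and the step it substitutes for them is not repairable.
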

\begin{proof}
First note that since $\varphi\in C^\infty_c(0,1)$, we have $\hat{\varphi}\in\mathcal{S}(\mathbf{R})$ and then it is straightforward to show that
\[ \mu\mapsto \varphi\left(\lambda - \mu + \tfrac{\beta^2}{\mu}\right) \in \mathcal{S}(\mathbf{R}^+). \]
This also means that this map is well defined, because
\[ \lim_{\mu\rightarrow 0}\left(1+\frac{\beta^2}{\mu^2} \right) \hat{\varphi}\left(\lambda - \mu + \tfrac{\beta^2}{\mu}\right) =0. \]
The estimate in \eqref{fourierestimate} and the closure of $\mathcal{S}(\mathbf{R})$ under convolution imply
\[ \hat{\varphi} \star \rho_i  = -\hat{\varphi}\star \rho_{i-1} - \hat{\varphi}\star \rho_{i+1} \,\,\, \textrm{mod $\mathcal{S}(\mathbf{R})$.} \]
We need to show that the terms on the right hand side of this expression are Schwartz functions. The analysis is similar for both terms, so we focus attention on $\hat{\varphi}\star\rho_{i+1}$. Our object of interest is given by
\begin{align} \hat{\varphi} \star \rho_{i+1} (\lambda) &= \int_0^\infty  \hat{\varphi}\left(\lambda  - \mu + \tfrac{\beta^2}{\mu}\right) \rho_{i+1}(\mu)\, \dd M(\mu) \nonumber \\
 &= \lim_{\epsilon\rightarrow 0} \int_0^\infty \hat{\varphi}\left(\lambda - \mu + \tfrac{\beta^2}{\mu}\right) \rho_{i+1}(\mu) e^{-\epsilon (\mu+\beta^2/\mu)}\, \dd M(\mu), \label{regularisation}
\end{align}
where we have inserted a regularisation for later convenience. This is justified since $\rho_{i+1}(\lambda)$ is certainly polynomially bounded (by the estimate in lemma \ref{distlem}) and $\hat{\varphi}\in \mathcal{S}(\mathbf{R})$, so the dominated convergence theorem applies. Now one can show that any distribution in $\mathcal{E}'[0,1]$ can be written as a finite sum of derivatives of bounded functions (see appendix). Employing this result we may assume that there exist bounded functions $\{g_j\}_{j=1}^N$ with supports contained in $[0,1]$ such that
\[ \left\langle \partial_\mathbf{n} q_{i+1}, \psi_{i+1}^*\left( e^{-\ii\lambda z + \ii \beta^2 \bar{z}/\lambda}\right)\right\rangle = \sum_{j=0}^N\int_0^1 g_j (\tau) \left(\frac{\dd}{\dd \tau}\right)^j \left( e^{-\ii\lambda \tau z_{i+2}  + \ii \beta^2\tau \bar{z}_{i+2}/\lambda}\right) \dd \tau, \]
and similarly for the term corresponding to $q_{i+1}$. Hence a generic term appearing in $\rho_{i+1}(\lambda)$ can be represented in the form
\[ \lambda^\beta \int_0^1 f(\tau) e^{-\ii\lambda \tau z_{i+2}  + \ii \beta^2\tau \bar{z}_{i+2}/\lambda}\, \dd \tau, \]
where $f$ is some bounded function and $\beta\in\mathbf{Z}$. Since $\rho_{i+1}(\lambda)$ is just a finite sum of such terms, it is sufficient for our analysis to use this expression instead of $\rho_{i+1}(\lambda)$. Applying Fubini's theorem, equation \eqref{regularisation} reads
\begin{align} & \hat{\varphi}\star\rho_{i+1} (\lambda)\nonumber \\  &\quad = \int_0^\infty \hat{\varphi}\left(\lambda  - \mu + \tfrac{\beta^2}{\mu}\right) \left[\mu^\beta \int_0^1 f(\tau) e^{-\ii\mu \tau z_{i+2}  + \ii \beta^2\tau \bar{z}_{i+2}/\mu}\, \dd \tau \right]\dd M(\mu) \nonumber \\
 &\quad = \lim_{\epsilon\rightarrow 0} \int_0^1  e^{-\ii\lambda y} \varphi (y) F_\epsilon(y)\, \dd y, \label{smooth}
\end{align}
where
\begin{align*} F_\epsilon(y) &= \int_0^1 f(\tau) \left[ \int_0^\infty \mu^\beta e^{-\mu (\ii(\tau z_{i+2} - y) + \epsilon)} e^{-\beta^2 (-\ii (\tau \bar{z}_{i+2} - y) + \epsilon)/\mu} \dd M(\mu) \right] \dd \tau. \\
 &\equiv \int_0^1 f(\tau) \left[ \int_0^\infty \mu^\beta e^{-\mu z_\epsilon(\tau,y)} e^{-\beta^2 \bar{z}_\epsilon (\tau,y)/\mu} \dd M(\mu) \right] \dd \tau,
\end{align*}
with $z_\epsilon(\tau,y)$ defined accordingly. We now proceed to show that the function $F(y) = \lim_{\epsilon\rightarrow 0} F_\epsilon(y)$ is smooth on the interval $(0,1)$. Making a change of variables in the $\mu$-integral
\[ \mu' = \mu z_\epsilon(\tau,y), \]
and noting that $\Re z_\epsilon(\tau,y) \geq \epsilon$, we can use elementary techniques from complex analysis to show that the corresponding contour of integration in the $\mu'$-plane can be rotated back to the real axis. This gives
\[ F_\epsilon(y) = \int_0^1 \frac{f(\tau)}{z_\epsilon(\tau,y)^{\beta+1}} \left[ \int_0^\infty (\mu')^\beta e^{-\mu'} e^{-\beta^2|z_\epsilon(\tau,y)|^2/\mu'} \dd M(\mu'/z_\epsilon) \right] \dd x. \]
For any $y\in (0,1)$ there exists a $\delta_y>0$ such that $|z_0(\tau,y)|\geq \delta_y$ for $\tau \in [0,1]$ so we can take $\epsilon\rightarrow 0$ for $y$ in this open interval, although the convergence is not uniform in $y$. Also, for any $y\in (0,1)$ the $\mu'$ integral decays exponentially so we can differentiate under the integral sign as many times as needed. It follows that $F \in C^\infty (0,1)$ and hence $\varphi F \in C_c^\infty(0,1)$. We deduce from \eqref{smooth} that $\hat{\varphi}\star \rho_{i+1} \in \mathcal{S}(\mathbf{R})$, since it is the Fourier transform of a smooth function with compact support. The argument showing $\hat{\varphi}\star \rho_{i-1}\in \mathcal{S}(\mathbf{R})$ follows in a similar fashion.
\end{proof}
\begin{proof}[Proof of Theorem \ref{regthm}]
Using the result of the previous lemma we have
\[ \hat{\varphi}\star \rho_i \in \mathcal{S}(\mathbf{R}) \qquad \forall \varphi \in C^\infty_c(0,1). \]
We introduce the new variable $k=\mu-\beta^2/\mu$ for the integral defining $\hat{\varphi}\star \rho_i$. It is clear that $k(\mu):\mathbf{R}^+\rightarrow\mathbf{R}$ is a diffeomorphism and $\dd k = \dd M(\mu)$. Using this observation in \eqref{fouriernote} we find
\[ \rho_i(\mu(k)) = (\partial_\mathbf{n} q_i)\hat{\,}(k) - \sqrt{k^2+4\beta^2}\hat{q}_i (k). \]
Making the substitution in the integral for $\hat{\varphi}\star \rho_i$ we find
\[ \int_{-\infty}^\infty \hat{\varphi}(\lambda-k) (\partial_\mathbf{n} q_i)\hat{\,}(k)\, \dd k - \int_{-\infty}^\infty \hat{\varphi}(\lambda-k)\sqrt{k^2+4\beta^2} \hat{q}_i(k)\, \dd k=0 \,\,\, \mathrm{mod}\, \mathcal{S}(\mathbf{R}). \]
The first term on the left hand side it just $\hat{\varphi}*(\partial_\mathbf{n} q_i)\hat{\,}$, so
\begin{align} (\varphi \partial_\mathbf{n} q_i)\hat{\,}(\lambda) &= \frac{1}{2\pi} (\hat{\varphi} * (\partial_\mathbf{n} q_i)\hat{\,})(\lambda) \nonumber \\
&= \frac{\ii\beta^2}{ \pi} \int_{-\infty}^\infty \hat{\varphi}(\lambda -k)\sqrt{k^2+4\beta^2} \hat{q}_i(k)\, \dd k\,\,\, \mathrm{mod}\,\mathcal{S}(\mathbf{R}). \label{conv} 
\end{align}
Using the standard notation $\l \lambda \r = (1+\lambda^2)^{1/2}$, it follows that
\[ \l k \r \simeq (k^2 + 4\beta^2)^{1/2}. \]
We want to estimate the $L^2$ norm of the right hand side of \eqref{conv} after multiplication by $\l \lambda \r^{s-1}$, i.e. we want to estimate the $L^2$ norm of
\[  \int_{-\infty}^\infty \frac{ \l \lambda \r^{s-1} }{\l \lambda -k \r^{s-1}}|\hat{\varphi}(k)| \l \lambda - k \r^s |\hat{q}_i(\lambda-k)|\, \dd k. \]
Using Peetre's inequality
\[ \frac{\l \lambda \r^{s-1}}{\l \lambda - k\r^{s-1}} \leq 2^{|s-1|} \l k \r^{s-1}, \]
and applying Young's $L^2$ inequality we find
\[ \left\| \int_{-\infty}^\infty \frac{ \l \lambda \r^{s-1} }{\l \lambda -k \r^{s-1}}|\hat{\varphi}(k)| \l \lambda - k \r^s |\hat{q}_i(\lambda-k)|\, \dd k \right\|_{L^2} \leq \| \varphi \|_{H^{s-1}} \| q_i \|_{H^s}. \]
Using this estimate in \eqref{conv} we find that $\varphi \partial_\mathbf{n} q_i \in H^{s-1}(\mathbf{R})$. Since $\varphi\in C^\infty_c(0,1)$ is arbitrary, we conclude that $\partial_\mathbf{n} q_i \in H^{s-1}_\mathrm{loc} (0,1)$.
\end{proof}
\begin{remark}
The proof is almost the same if one assumes that $\varphi \in \mathcal{S}(0,1)$. The only modification needed is to take into consideration that the function $F(y)=\lim_{\epsilon\rightarrow 0} F_\epsilon(y)$ has a singularities of finite order at $y=0$. It is then easy to show that $\varphi F \in \mathcal{S}(0,1)$, so its Fourier transform belongs to $\mathcal{S}(\mathbf{R})$ and the proof follows as in Theorem \ref{regthm}.
\end{remark}

We note that this proof can easily be modified to deal with more general Sobolev spaces $W^{s,p}(\Gamma_i)$ where $p\in(1,\infty)$. Here we only state the relevant definitions and results, and refer the reader to \cite{hormander1985analysis3} for a comprehensive account.

The class of Fourier multipliers $S^n(\mathbf{R})$ are defined by the smooth functions $p(x,\lambda)$ such that
\[ | \partial^\beta_x \partial^\alpha_\lambda p(x,\lambda)| \leq C_{\alpha\beta} \langle \lambda \rangle^{n- |\alpha| } .\]
For $p\in S^n(\mathbf{R})$ we define the pseudo-differential operator $p(x,D)\in \Psi^n(\mathbf{R})$ via
\[ p(x,D) f(x) = \int_{-\infty}^\infty p(x,\lambda) \hat{f}(\lambda) e^{\ii \lambda x}\, \dd \lambda, \]
for Schwartz functions $f$. The action of $p(x,D)$ can then be defined on $\mathcal{S}'(\mathbf{R})$ by duality. A fundamental property of the pseudo-differential operators $\Psi^{n}(\mathbf{R})$ is that for $q\in (1,\infty)$ the following map is continuous,
\[ (p,f) \in S^{n}(\mathbf{R}) \times W^{s,q}(\mathbf{R}) \mapsto p(x,D)f \in W^{s-n, q}_{\mathrm{loc}}(\mathbf{R}), \]
meaning that for $p\in S^n(\mathbf{R})$ and $f\in W^{s,q}(\mathbf{R})$ with $q\in (1,\infty)$, we have
\[ \| p(x,D) f\|_{W^{s-n,q}_{\mathrm{loc}}} \leq C \| f\|_{W^{s,q}}, \]
for some constant $C$. We say that $p(x,D) \in \Psi^n(\mathbf{R})$ is elliptic of order $n$ if
\[ |p(x,\lambda)| \geq C \langle \lambda \rangle ^{-n}, \]
for $|\lambda|$ sufficiently large. With these definitions we have the following result.
\begin{theorem}
Let $\{\partial_\mathbf{n} q_i\}_{i=1}^n$ and $\{q_i\}_{i=1}^n$ be distributional solutions of \eqref{grdist}. Then, there exists an elliptic pseudo-differential operator $p(D)\in \Psi^{1}(\mathbf{R})$, such that for each $\varphi \in C^\infty_c(0,1)$
\[ \varphi \left( \partial_\mathbf{n} q_i - p(D) q_i\right) =0\,\,\, \mathrm{mod}\,\mathcal{S}(\mathbf{R}). \]
\end{theorem}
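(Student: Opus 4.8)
The plan is to follow the proof of Theorem \ref{regthm} almost verbatim, the one new ingredient being the observation that the multiplier $\sqrt{k^2+4\beta^2}$ appearing there is precisely the symbol of an elliptic pseudo-differential operator of order one. Accordingly I would set $p(\lambda)=\sqrt{\lambda^2+4\beta^2}$ and let $p(D)$ be the associated Fourier multiplier, defined on $\mathcal{S}'(\mathbf{R})$ by duality and normalised so that $(p(D)q_i)\hat{\,}(\lambda)=p(\lambda)\hat{q}_i(\lambda)$; since $q_i\in\mathcal{E}'[0,1]\hookrightarrow\mathcal{S}'(\mathbf{R})$ has Fourier transform of polynomial growth, this product is a well-defined tempered distribution. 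The first step is to check that $p\in S^1(\mathbf{R})$ and is elliptic: because $p(\lambda)\simeq\langle\lambda\rangle$ with $|p^{(\alpha)}(\lambda)|\leq C_\alpha\langle\lambda\rangle^{1-\alpha}$ for every $\alpha\geq 0$, and $p(\lambda)\geq|\lambda|\geq c\langle\lambda\rangle$ for $|\lambda|$ large, both requirements are immediate.

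The second step is to recall from the preceding lemma that $\hat{\varphi}\star\rho_i\in\mathcal{S}(\mathbf{R})$ for every $\varphi\in C^\infty_c(0,1)$, and to perform in the integral defining $\hat{\varphi}\star\rho_i$ the same diffeomorphic change of variables $k=\mu-\beta^2/\mu$ used in Theorem \ref{regthm}, for which $\dd k=\dd M(\mu)$ and $\lambda-\mu+\beta^2/\mu=\lambda-k$. Substituting the identity \eqref{fouriernote} in the form $\rho_i(\mu(k))=(\partial_\mathbf{n}q_i)\hat{\,}(k)-\sqrt{k^2+4\beta^2}\,\hat{q}_i(k)$ then converts the statement $\hat{\varphi}\star\rho_i\in\mathcal{S}(\mathbf{R})$ into
\[ \bigl(\hat{\varphi}*(\partial_\mathbf{n}q_i)\hat{\,}\bigr)-\bigl(\hat{\varphi}*(p(\cdot)\hat{q}_i)\bigr)\in\mathcal{S}(\mathbf{R}). \]

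The third step is to read both convolutions as $\varphi$-localised products. By the convolution theorem $\hat{\varphi}*\hat{g}=2\pi(\varphi g)\hat{\,}$, and since $p(\cdot)\hat{q}_i=(p(D)q_i)\hat{\,}$ by construction, the displayed relation becomes $\bigl(\varphi(\partial_\mathbf{n}q_i-p(D)q_i)\bigr)\hat{\,}\in\mathcal{S}(\mathbf{R})$. As the Fourier transform is a bijection of $\mathcal{S}(\mathbf{R})$ onto itself, this yields $\varphi(\partial_\mathbf{n}q_i-p(D)q_i)\in\mathcal{S}(\mathbf{R})$, which is exactly $\varphi(\partial_\mathbf{n}q_i-p(D)q_i)=0\bmod\mathcal{S}(\mathbf{R})$; since $\varphi\in C^\infty_c(0,1)$ was arbitrary the theorem follows.

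I expect no genuine obstacle here, since all the analytic difficulty---showing that the far-away terms $\rho_{i\pm1}$ and $\rho_j$ contribute only Schwartz-class errors after smearing against $\hat{\varphi}$---was already dispatched in the two preceding lemmas. The only points needing care are bookkeeping ones: confirming that the product $p(\lambda)\hat{q}_i(\lambda)$ genuinely defines $p(D)q_i$ as an element of $\mathcal{S}'(\mathbf{R})$ (so that the identification of the second convolution term is legitimate), and keeping the Fourier-transform normalisation consistent so that the factors of $2\pi$ cancel. The interpretive content is simply that the Dirichlet-to-Neumann relation encoded in the global relation is, modulo a smoothing error, the action on $q_i$ of the first-order elliptic operator with symbol $\sqrt{\lambda^2+4\beta^2}$.
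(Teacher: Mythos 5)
Your proposal is correct and follows essentially the same route as the paper: the paper likewise starts from the identity \eqref{conv} (which is exactly your change of variables $k=\mu-\beta^2/\mu$ combined with \eqref{fouriernote} and the lemma that $\hat{\varphi}\star\rho_i\in\mathcal{S}(\mathbf{R})$), reads the convolution as $(\varphi\, p(D)q_i)\hat{\,}$ via the convolution theorem, and concludes by the Fourier isomorphism on $\mathcal{S}(\mathbf{R})$. The only difference is cosmetic: the paper takes $p(k)=2\ii\beta^2\sqrt{k^2+4\beta^2}$ to match its normalisation of \eqref{conv}, while you take $p(\lambda)=\sqrt{\lambda^2+4\beta^2}$; the constant factor affects neither the order nor the ellipticity of $p(D)$.
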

\begin{proof}
Recall the equality \eqref{conv}
\begin{equation} (\varphi \partial_\mathbf{n} q_i)\hat{\,}(\lambda)=  \frac{\ii\beta^2}{ \pi} \int_{-\infty}^\infty \hat{\varphi}(\lambda -k)\sqrt{k^2+4\beta^2} \hat{q}_i(k)\, \dd k\,\,\, \mathrm{mod}\,\mathcal{S}(\mathbf{R}), \label{conv'}\end{equation}
is valid for all $\varphi\in C^\infty_c(0,1)$. We introduce the function
\[ p (k) = 2\ii \beta^2\sqrt{k^2+4\beta^2} . \]
It is clear that $p(D) \in \Psi^1(\mathbf{R})$ is elliptic. Working modulo the space of Schwartz functions, the equivalence in \eqref{conv'} reads
\begin{align*} (\varphi \partial q_i)\hat{\,}(\lambda) &= \frac{1}{2\pi}\int_{-\infty}^\infty \varphi(\lambda-k) p(k) \hat{q}_i(k)\, \dd k\\
 &= \frac{1}{2\pi}\hat{\varphi} * (p(D)q)\hat{\,}(\lambda) \\
 &= (\varphi p(D) q)\hat{\,}(\lambda).
\end{align*}
So we certainly have
\[  (\varphi \partial_\mathbf{n} q_i - \varphi p(D) q_i)\hat{\,}(\lambda) = 0 \,\,\, \mathrm{mod}\,\mathcal{S}(\mathbf{R}). \]
The result follows from the fact the Fourier transform is an isomorphism on $\mathcal{S}(\mathbf{R})$.
\end{proof}

\section{Corner Singularities}\label{cornersingfokas}
We consider the existence of corner singularities for solutions of the modified Helmholtz equation in a polygonal domain. The basic differential form for the Modified Helmholtz equation is given by
\begin{equation} W(z,\bar{z};\lambda) = e^{-\ii\lambda z+\ii\beta^2 \bar{z}/\lambda}\left[ \left( \frac{\partial q}{\partial z}+\ii \lambda q\right) \dd z + \left(\frac{\ii\beta^2}{\lambda} q-\frac{\partial q}{\partial \bar{z}} \right) \dd\bar{z}\right]. \label{fok1} \end{equation}
The associated global relation is given by $\oint_{\partial\Omega} W=0$. We concentrate on the particular corner $z_i$. Introducing the local coordinates $(\rho,\theta)$ defined by $z-z_i = \rho e^{\ii \theta}$, and using
\begin{equation} \frac{\partial}{\partial z} = \frac{e^{-\ii \theta}}{2} \left( \frac{\partial}{\partial \rho} - \frac{\ii}{\rho} \frac{\partial}{\partial\theta} \right), \quad \quad \frac{\partial}{\partial \bar z} = \frac{e^{\ii \theta}}{2} \left( \frac{\partial}{\partial \rho} + \frac{\ii}{\rho} \frac{\partial}{\partial\theta} \right), \label{fok2}\end{equation}
we find that
\begin{equation} W = \ii e^{-\ii\lambda z(\rho,\theta)+\ii\beta^2 \bar{z}(\rho,\theta)/\lambda} \left[ -\frac{1}{\rho} \frac{\partial q}{\partial \theta} + \left( \lambda e^{\ii \theta} + \frac{\beta^2}{\lambda e^{\ii\theta}} \right) q \right] \dd \rho. \label{fok3} \end{equation}
Without loss of generality we position the polygonal domain in the complex $z$-plane so that $z_i=\ii$, and $\Im z_j <0$ for $j\neq i$ as in Figure \ref{cornerdomain}.
\begin{figure}\label{cornerdomain}
\begin{center}
\includegraphics[scale=1.0]{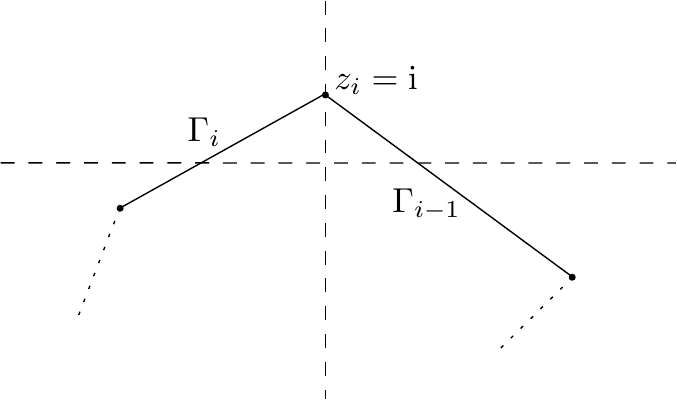}
\end{center}
\caption{\small Gauge choice for corner singularity analysis.}
\end{figure}
Then, for $z$ on any side other than on the sides $\Gamma_i$, $\Gamma_{i-1}$, and for $\lambda$ real and positive, we have
\begin{equation} \left| e^{-\ii \lambda z + \ii \beta^2 \bar{z}/\lambda} \right| \leq e^{-\epsilon \lambda}, \quad z \in \partial \Omega \setminus ( \Gamma_i \cup \Gamma_{i-1}), \label{fok4} \end{equation}
for some $\epsilon>0$. Letting $z_i=\ii$, we find
\begin{equation} e^{-\ii \lambda z + \ii \beta^2 \bar{z}/\lambda} = e^{\lambda + \beta^2/\lambda} e^{-\rho k(\lambda,\theta)}, \qquad k(\lambda,\theta) \defn \lambda e^{\ii (\theta+\pi/2)} + \frac{\beta^2}{\lambda e^{\ii(\theta+\pi/2)}}. \label{fok5} \end{equation}
For both $\Gamma_i$ and $\Gamma_{i-1}$, $\Re k(\lambda,\theta)>0$, since
\[ -\pi < \theta < 0, \quad \textrm{i.e.} \quad |\theta+\pi/2| < \pi/2. \]
The sides $\Gamma_{i-1}$ and $\Gamma_i$ yield the following contribution to the global relation:
\begin{multline}
\ii e^{\lambda + \beta^2/\lambda} \left[ \int_0^{|\Gamma_i|} e^{-\rho k(\lambda,-\theta_i)} \left[ \frac{1}{\rho} \frac{\partial q}{\partial \theta} (\rho,-\theta_i) + \right.\right. \\
 \left.\left. \left( \lambda e^{-\ii \theta_i} + \frac{\beta^2}{\lambda e^{-\ii\theta_i}} \right) q(\rho,-\theta_i) \right] \dd \rho \right]  \\
-\ii e^{\lambda + \beta^2/\lambda}\left[  \int_0^{|\Gamma_{i-1}|}e^{-\rho k(\lambda,-\theta_{i-1})} \left[ \frac{1}{\rho} \frac{\partial q}{\partial \theta} (\rho,-\theta_{i-1}) + \right.\right. \\
 \left.\left.  \left( \lambda e^{-\ii \theta_{i-1}} + \frac{\beta^2}{\lambda e^{-\ii\theta_{i-1}}} \right) q(\rho,-\theta_{i-1}) \right] \dd \rho  \right]. \label{fok6}
\end{multline}
We can examine the possibility of corner singularities by considering the limit of equation \eqref{fok6} as $\lambda\rightarrow \infty$. I what follows we analyse several types of boundary conditions.

\subsection{Continuous Neumann-Neumann}
Suppose that Neumann boundary conditions $\rho^{-1}\partial q/\partial \theta(\rho,-\theta_\ell)$ for $\ell=i-1,i$ are prescribed at the sides $\Gamma_{i-1}$ and $\Gamma_i$. Furthermore, assume that the given data are continuous at the corner. Let
\[ q(\rho,-\theta_\ell) \sim D_\ell \rho^{d_\ell} \quad \textrm{as $\rho\rightarrow 0$,} \quad \ell = i,i-1, \]
where $d_\ell>-1$ for $\ell=i,i-1$, so that the integrals in \eqref{fok6} are well-defined. Employing Watson's lemma\footnote{Recall Watson's lemma $\int_0^1 e^{-\nu \rho} \rho^\ell \, \dd \rho \sim \Gamma(\ell+1) \nu^{-1-\ell}$ as $\nu \rightarrow \infty$.} in \eqref{fok6} we find
\begin{equation} \frac{\lambda e^{-\ii \theta_i} D_i \Gamma(d_i +1)}{\left( \lambda e^{\ii(-\theta_i + \pi/2)}\right)^{d_i+1}} = \frac{ \lambda e^{-\ii \theta_{i-1}} D_{i-1} \Gamma(d_{i-1}+1)}{\left( \lambda e^{\ii (-\theta_{i-1}+\pi/2)}\right)^{d_{i-1}+1}}. \label{fok9} \end{equation}
Hence,
\[ d_{i-1}=d_i, \quad \frac{D_i}{D_{i-1}} = e^{-\ii \Delta_{i,i-1} d_i}. \]
Note that convexity implies that $\Delta_{i,i-1} = \alpha_i - \alpha_{i-1} \in (0,\pi)$. We deduce that there exists an integer $M\in \{0,1,2,\ldots\}$, such that
\begin{equation} d_{i-1}=d_i = \frac{2\pi M}{\Delta_{i,i-1}},  \quad D_{i-1} = D_i. \label{fok10} \end{equation}
The inequalities $d_\ell >-1$ for $\ell=i,i-1$ imply that $M$ cannot be negative. We deduce that either both $q(\rho,-\theta_\ell)$ for $\ell=i,i-1$ are $\mathcal{O}(1)$ as $\rho\downarrow 0$ or
\[ d_{i-1} = d_i > 2. \]
In either case, corner singularities do not occur, independent of the internal angle $\Delta_{i,i-1}$.

\subsection{Continuous Dirichlet-Dirichlet}
Suppose that Dirichlet boundary conditions $q(\rho,-\theta_{\ell})$, $\ell=i,i-1$ are prescribed at the sides $\Gamma_i$ and $\Gamma_{i-1}$. Furthermore, assume that the given data are continuous at the corner. Let
\[ \frac{\partial q}{\partial \theta} (\rho,-\theta_\ell) \sim N_\ell \rho^{n_\ell} \quad \textrm{as $\rho\rightarrow 0$,} \quad \ell=i,i-1, \]
where $n_\ell>0$ for $\ell=i,i-1$, so the integrals in \eqref{fok6} are well-defined. The leading order term in $\lambda$ as $\lambda \rightarrow \infty$ originating from the Dirichlet data vanishes due to continuity. Indeed, this term gives rise to the contribution, as $\lambda\rightarrow \infty$,
\[ \ii \Big( q(0,-\theta_i) - q(0,-\theta_{i-1})\Big), \]
which vanishes. Employing Watson's lemma in \eqref{fok6} we find
\[ \frac{ N_i \Gamma(n_i) }{\left( \lambda e^{\ii(-\theta_i + \pi/2)} \right)^{n_i}} = \frac{N_{i-1} \Gamma(n_{i-1})}{\left( \lambda e^{\ii (-\theta_{i-1} + \pi/2)}\right)^{n_{i-1}}}. \]
Thus, there exists an integer $M\in \{1,2,\ldots\}$, such that
\[ n_{i-1} = n_i = \frac{2\pi M}{\Delta_{i,i-1}}, \quad N_{i-1} = N_i. \]
Employing similar reasoning as in the previous section we deduce that corner singularities do not arise, independent of the internal angle $\Delta_{i,i-1}$.

\subsection{Vanishing at the corner: Dirichlet-Neumann}
Suppose that Neumann and Dirichlet boundary conditions are prescribed at the side $\Gamma_i$ and $\Gamma_{i-1}$ respectively. Furthermore, assume that the given data vanish at the corner. Let
\[ q(\rho,-\theta_i) \sim D_i \rho^{d_i}, \quad \frac{\partial q}{\partial \theta} (\rho,-\theta_{i-1}) \sim N_{i-1} \rho^{n_{i-1}} \quad \textrm{as $\rho\rightarrow\infty$,} \]
where $d_i>-1$ and $n_{i-1}>0$. Employing Watson's lemma in \eqref{fok6} we find
\[ \frac{ \lambda e^{-\ii\theta_i} D_i \Gamma( d_i+1)}{\left( \lambda e^{\ii(-\theta_i + \pi/2)}\right)^{d_i+1}} = \frac{ N_{i-1} \Gamma(n_{i-1})}{ \left( \lambda e^{\ii(-\theta_{i-1} + \pi/2)}\right)^{n_{i-1}}}. \]
Hence
\[ d_i = n_{i-1}, \quad \frac{D_i d_i}{N_{i-1}} = e^{\ii (\pi/2 - \Delta_{i,i-1} d_i)}. \]
We deduce that there exists an $M\in \{0,1,2,\ldots \}$ such that
\[ N_{i-1} =D_i d_i , \quad d_i=n_{i-1} = \frac{\pi}{\Delta_{i,i-1}} \left( 2M + \frac{1}{2}\right). \]
Singularities can occur only if $n_{i-1} < 1$, i.e. if $\Delta_{i,i-1} > \pi/2$. We conclude that corner singularities can occur only if there is an internal angle smaller larger than $\pi/2$, in agreement with the classical results for elliptic boundary value problems in corner domains \cite{dauge1988elliptic}.

\subsection{Discontinuous Dirichlet-Dirichlet}
In this case, as $\lambda\rightarrow \infty$, there exists a non-vanishing term proportional to the discontinuity. This implies the existence of non-integrable singularities.

\begin{remark}
The limit as $\lambda \rightarrow 0$ yields precisely the same information as the limit $\lambda \rightarrow \infty$. This is not surprising, since \eqref{fok6} is invariant under the transformation $\lambda \mapsto \beta^2/\lambda$ followed by complex conjugation.
\end{remark}

\section{Non-Integrable Singularities}
It was shown in \S\ref{cornersingfokas} that discontinuous Dirichlet boundary conditions can give rise to non-integrable singularities. In order to eliminate such a singularity at the corner $z_i$, we introduce the differential form $V$ defined by
\[ V(\rho,\theta,\lambda) = W(\rho,\theta,\lambda) + \dd \left( \ii e^{\lambda + \beta^2/\lambda} e^{-\rho k(\lambda,\theta)} \int_{\rho^*}^\rho \frac{1}{\rho'} \frac{\partial q}{\partial \theta} (\rho',\theta)\, \dd \rho' \right), \]
where $\rho^* \neq 0$. Along a ray of constant $\theta$, $V$ is given by
\begin{multline} V(\rho,\theta,\lambda) = \ii e^{\lambda+\beta^2/\lambda} e^{-\rho k(\lambda,\theta)} \left[ -\ii \left( \lambda e^{\ii \theta} - \frac{\beta^2}{\lambda e^{\ii\theta}}\right) \int_{\rho^*}^\rho \frac{1}{\rho'} \frac{\partial q}{\partial \theta} (\rho',\theta)\, \dd \rho'\right. \\
 \left. + \left( \lambda e^{\ii\theta} + \frac{\beta^2}{\lambda e^{\ii\theta}}\right) q(\rho,\theta) \right]\dd \rho \label{fokas5.1} \end{multline}
and hence the non-integrable singularity is eliminated. A boundary value problem involving several non-integrable singularities, can be decomposed into a series of problems each involving a single corner. In what follows we analyse in detail a particular boundary value problem which has two non-integrable singularities. In this case it is possible to eliminate both of these singularities using a single differential form.

\subsection*{Example} Let $q(x,y)$ satisfy the modified Helmholtz equation in the interior of the half-strip formed by the corners
\[ z_1 = \infty + \ii \ell, \quad z_2 = \ii \ell, \quad z_3 =0, \quad z_4 = \infty, \]
where $\ell$ is real and positive. Let
\begin{equation} q(x,0) = q(x,\ell) = 0, \quad 0<x<\infty; \quad q(0,y) = 1, \quad 0<y<\ell. \end{equation}

The symmetry relation $q(x,y)=q(x,\ell-y)$ implies
\[ \frac{\partial q}{\partial y} (x,0) = - \frac{\partial q}{\partial y} (x,\ell), \quad \frac{\partial q}{\partial y} (x, \tfrac{1}{2} \ell) =0, \quad 0<x<\infty. \]
It is possible to eliminate the non-integrable singularities at the corners $z_2$ and $z_3$ using the following differential form:
\begin{equation} V = W + \dd \left[ e^{\Omega x + \omega y} c_1 \int_\infty^x \frac{\partial q}{\partial y}(x',y)\, \dd x' +  e^{\Omega x + \omega y} c_2 \int_{\ell/2}^y \frac{\partial q}{\partial x} (x,y')\, \dd y' \right], \label{fokas5.6}\end{equation}
where
\[ \Omega(\lambda) = -\ii \left( \lambda - \frac{\beta^2}{\lambda} \right), \quad \omega(\lambda) = \lambda+\frac{\beta^2}{\lambda} \]
and $c_1,c_2$ are constants that satisfy $c_1 - c_2 =1$. Indeed, the definition of $W$ in \eqref{fok1} implies
\[ W= e^{\Omega x + \omega y} \left[ \left( - \frac{\partial q}{\partial y} + \omega q	\right) \dd x + \left( \frac{\partial q}{\partial x} - \Omega q\right) \dd y \right]. \]
Using this equation and employing the identities
\begin{align*}
\int_{\ell/2}^y \frac{\partial^2 q}{\partial x^2} (x,y')\, \dd y' &= 4\beta^2 \int_{\ell/2}^y q(x,y')\, \dd y' - \frac{\partial q}{\partial y}(x,y), \\
\int_\infty^x \frac{\partial^2 q}{\partial y^2} (x',y)\, \dd x' &= 4\beta^2 \int_\infty^x q(x,y')\, \dd x' - \frac{\partial q}{\partial x}(x,y),
\end{align*}
equation \eqref{fokas5.6} becomes
\begin{align}
W = e^{\Omega x + \omega y} & \left\{   \left[ \Omega \left( c_1 \int_\infty^x \frac{\partial q}{\partial y}(x',y)\, \dd x' + c_2 \int_{\ell/2}^y \frac{\partial q}{\partial x}(x,y')\, \dd y' \right) \right. \right. \label{fokas5.9} \\
&\qquad\qquad\qquad\left. +\, 4\beta^2 c_2 \int_{\ell/2}^y q(x,y')\, \dd y' + \omega q(x,y) \right] \dd x \nonumber  \\
& +\left[ \omega \left( c_1 \int_\infty^x \frac{\partial q}{\partial y}(x',y)\, \dd x' + c_2 \int_{\ell/2}^y \frac{\partial q}{\partial x}(x,y')\, \dd y' \right) \right. \nonumber \\
&\qquad\qquad\qquad \left.\left. +\, 4\beta^2 c_1 \int_\infty^x q(x',y)\, \dd x' -\Omega q(x,y) \right] \dd y \right\}. \nonumber
\end{align}
Thus, the Neumann boundary values, which involve the non-integrable singularities, have been eliminated.

For simplicity we set $c_1=1$ and $c_2 =0$, thus, \eqref{fokas5.9} becomes
\begin{align}
W = e^{\Omega x + \omega y} & \left\{   \left[ \Omega \int_\infty^x \frac{\partial q}{\partial y}(x',y)\, \dd x'   + \omega q(x,y) \right]\right. \dd x \label{fokas5.10}  \\
& \left. +\left[ \omega  \int_\infty^x \frac{\partial q}{\partial y}(x',y)\, \dd x'  + 4\beta^2  \int_\infty^x q(x',y)\, \dd x' -\Omega q(x,y) \right] \dd y \right\}. \nonumber
\end{align}

Using this differential form and employing the method of \cite{fokas1997unified,fokas2000integrability}, it can be shown that $q$ is given by
\begin{multline} q(x,y) = -\frac{1}{2\pi} \left\{ \int_0^\infty e^{-\Omega x - \omega y} \frac{ G(\lambda)}{1+ e^{\omega \ell}} \frac{ \dd \lambda}{\lambda}  + \int_{\ii\infty}^0 e^{-\Omega x-\omega y} G(\lambda) \, \frac{\dd \lambda}{\lambda} \right. \\ \left. + \int_0^{-\infty} e^{-\Omega x + \omega (\ell-y)} \frac{G(\lambda)}{1+ e^{\omega \ell}} \frac{\dd\lambda}{\lambda} \right\}.  \label{fokas5.11} \end{multline}
where
\[ G(\lambda) = \Omega(\lambda) \frac{ e^{\omega(\lambda) \ell} -1}{\omega(\lambda)}. \]
In what follows we verify that the expression, as defined by \eqref{fokas5.11}, satisfies the given boundary conditions.

\subsubsection*{Boundary condition $q(x,0)=0$:} Evaluating equation \eqref{fokas5.11} at $y=0$ we find
\begin{multline} q(x,0) = -\frac{1}{2\pi} \left\{ \int_0^\infty e^{-\Omega x } \frac{ G(\lambda)}{1+ e^{\omega \ell}} \frac{ \dd \lambda}{\lambda}  + \int_{\ii\infty}^0 e^{-\Omega x} G(\lambda) \, \frac{\dd \lambda}{\lambda} \right. \\ \left. + \int_0^{-\infty} e^{-\Omega x + \omega \ell} \frac{G(\lambda)}{1+ e^{\omega \ell}} \frac{\dd\lambda}{\lambda} \right\}.  \nonumber \end{multline}
Noting that
\[ \int_0^{-\infty} e^{-\Omega x + \omega \ell} \frac{G(\lambda)}{1+ e^{\omega \ell}} \frac{\dd\lambda}{\lambda} \equiv \int_0^{-\infty} e^{-\Omega x } G(\lambda)\, \frac{\dd\lambda}{\lambda} - \int_0^{-\infty} e^{-\Omega x } \frac{G(\lambda)}{1+ e^{\omega \ell}} \frac{\dd\lambda}{\lambda} \]
we find
\[ q(x,0) = - \frac{1}{2\pi} \left( \int_{\ii\infty}^0 + \int_0^{-\infty} \right) e^{-\Omega x} G(\lambda) \, \frac{\dd\lambda}{\lambda}=0. \]

\subsubsection*{Boundary condition $q(x,\ell)=0$:} This follows in a similar fashion.

\subsubsection*{Boundary condition $q(0,y)=1$:} Evaluating \eqref{fokas5.11} at $x=0$ we find
\begin{multline} q(0,y) = -\frac{1}{2\pi} \left\{ \int_0^\infty e^{ - \omega y} \frac{ G(\lambda)}{1+ e^{\omega \ell}} \frac{ \dd \lambda}{\lambda}  + \int_{\ii\infty}^0 e^{-\omega y} G(\lambda) \, \frac{\dd \lambda}{\lambda} \right. \\ \left. + \int_0^{-\infty} e^{\omega (\ell-y)} \frac{G(\lambda)}{1+ e^{\omega \ell}} \frac{\dd\lambda}{\lambda} \right\}.  \label{fokas5.13} \end{multline}
In order to take care of the singularity at $\lambda = \ii \beta$, we deform the contour of the second integral on the right hand side of \eqref{fokas5.13} to the contour $L$ depicted in Figure 4.
\begin{figure}\label{contourL}
\begin{center}
\includegraphics[scale=0.8]{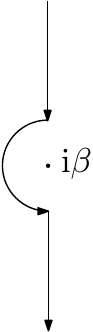}
\end{center}
\caption{Contour $L$ in the complex $\lambda$-plane.}
\end{figure}
Then, $q(0,y)$ can written in the form
\begin{multline} q(0,y) = -\frac{1}{2\pi} \left\{  \left( \int_L + \int_0^{-\infty}\right) e^{\omega (\ell -y)} \frac{\Omega}{\omega} \, \frac{\dd \lambda}{\lambda} - \left( \int_L + \int_0^{\infty} \right) e^{-\omega y} \frac{\Omega}{\omega}\, \frac{\dd \lambda}{\lambda} \right. \\
\left. - 2 \int_{-\infty}^\infty e^{-\omega y} \frac{e^{\omega \ell}}{1+e^{\omega \ell}} \frac{\Omega}{\omega}\, \frac{\dd \lambda}{\lambda} \right\}. \label{fokas5.14}  \end{multline}
Jordan's lemma implies that the first integral on the right hand side of \eqref{fokas5.14} vanishes. The integrand of the third integral remains invariant under the transformation $\lambda \mapsto 1/\lambda$, thus this integral also vanishes. The second integral on the right hand side of \eqref{fokas5.14} has a pole at $\lambda = \ii \beta$ with residue $-\ii$. Hence $q(0,y)=1$.

\subsubsection*{The non-integrable corner singularity}
Equation \eqref{fokas5.11} implies
\begin{multline*} \int_x^\infty \frac{\partial q}{\partial y}(x',0)\, \dd x' = \frac{1}{2\pi} \left\{ \int_0^\infty e^{-\Omega x} \left( \frac{ e^{\omega \ell} -1}{e^{\omega \ell} +1}\right) \frac{\dd \lambda}{\lambda} + \int_{\ii\infty}^0 e^{-\Omega x} (e^{\omega \ell} -1) \frac{\dd\lambda}{\lambda} \right. \\
\left. \quad + \int_0^{-\infty} e^{-\Omega x+\omega \ell} \left( \frac{ e^{\omega \ell} -1}{e^{\omega \ell} +1}\right) \frac{\dd \lambda}{\lambda} \right\}.
\end{multline*}
Using a similar argument to the one used in calculating $q(x,0)$ we find that the expression on the right hand side simplifies to
\begin{equation*} \int_x^\infty \frac{\partial q}{\partial y}(x',0)\, \dd x' =  \frac{1}{\pi} \int_0^\infty e^{-\ii (\lambda - \beta^2/\lambda)} \left( \frac{e^{(\lambda + \beta^2/\lambda)\ell} -1 }{ e^{(\lambda+\beta^2/\lambda)\ell} +1} \right) \frac{\dd \lambda}{\lambda}. \end{equation*}
We will show that
\begin{equation} \int_x^\infty \frac{\partial q}{\partial y}(x',0)\, \dd x' = - \frac{4}{\pi} \log x + \mathcal{O}(1) \quad \textrm{as $x\downarrow 0$.} \label{fokas5.17} \end{equation}
Indeed, letting $k = \lambda - \beta^2/\lambda$, i.e.
\[ \frac{\dd \lambda}{\lambda} = \frac{1}{\sqrt{ k^2+4\beta^2}}, \quad \lambda+\frac{\beta^2}{\lambda} = \sqrt{ k^2+4\beta^2}, \]
we find
\begin{multline} \int_x^\infty \frac{\partial q}{\partial y}(x',0)\, \dd x' = \frac{1}{\pi} \int_{-\infty}^\infty e^{-\ii k x} \left[ \frac{ e^{\ell\sqrt{ k^2+4\beta^2}}-1 }{e^{\sqrt{ k^2+4\beta^2}} +1} -1 \right] \frac{ \dd k}{\sqrt{ k^2+4\beta^2}} \\
+ \frac{1}{\pi} \int_{-\infty}^\infty \frac{e^{-\ii k x}}{\sqrt{ k^2+4\beta^2}}\, \dd k. \end{multline}
The integrand in the first term is $\mathcal{O}(1/k^2)$ as $k\rightarrow \infty$, so is absolutely integrable. Hence, the first term, being the Fourier transform of an element of $L^1(\mathbf{R})$, gives rise to a bounded, continuous function of $x$. The second integral is
\[ \frac{2}{\pi} \int_0^\infty \frac{\cos kx}{\sqrt{ k^2+4\beta^2} }\, \dd k \equiv \frac{2}{\pi} \int_0^1 \frac{ \cos k}{\sqrt{ k^2+4\beta^2x^2 }}\, \dd k + \frac{2}{\pi} \int_1^\infty \frac{ \cos k}{\sqrt{ k^2+4\beta^2x^2 }}\, \dd k. \]
It is straightforward to show that the second of these integrals is a bounded, continuous function of $x$. Writing $\cos k = (\cos k-1)+1$ we deduce
\[ \int_x^\infty \frac{\partial q}{\partial y}(x',0)\, \dd x' = \frac{2}{\pi} \int_0^1 \frac{\dd k}{\sqrt{k^2 + 4\beta^2 x^2}} + \mathcal{O}(1). \]
The claim in \eqref{fokas5.17} now follows by computing the elementary integral on the right hand side of the above equation.

\section{Appendix}
It was claimed in \S 4 that a distribution in $\mathcal{E}'[0,1]$ can be written as a finite sum of (weak)-derivatives of bounded functions supported on $[0,1]$. We have been unable to find the proof in the literature, so here we present a proof.
\begin{theorem*}
For each $u \in \mathcal{E}'[0,1]$ there exist a finite collection of complex Borel measures $\{ \mu_0, \ldots, \mu_n\} $ such that
\[ \l u, \varphi \r = \sum_{m=0}^n \int_0^1 \varphi^{(m)}(\tau)\, \dd \mu_m (\tau) \qquad \forall \varphi \in \mathcal{E}[0,1]. \]
\end{theorem*}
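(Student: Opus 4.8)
The plan is to recognise this as the structure theorem for distributions of finite order on an interval, and to prove it by converting the higher-order seminorm bound \eqref{seminorms} into a sup-norm bound on a subspace of a space of continuous functions, after which the Hahn--Banach theorem and the Riesz representation theorem deliver the measures.

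First I would fix the integer $N$ and constant $C$ from \eqref{seminorms} and introduce the ``jet map''
\[ J:\mathcal{E}[0,1]\rightarrow X \defn \big(C[0,1]\big)^{N+1},\qquad J\varphi = \big(\varphi,\varphi',\ldots,\varphi^{(N)}\big), \]
where $C[0,1]$ carries the sup-norm and $X$ the norm $\|(f_0,\ldots,f_N)\|=\sum_{m=0}^N\max_{[0,1]}|f_m|$. Since the zeroth component of $J\varphi$ already recovers $\varphi$, the map $J$ is injective, so on its range $W=J(\mathcal{E}[0,1])\subset X$ the formula $L(J\varphi)=\l u,\varphi\r$ defines a linear functional unambiguously. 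The crucial observation is that \eqref{seminorms} is precisely the statement that
\[ |L(J\varphi)| = |\l u,\varphi\r| \le C\sum_{m=0}^N \delta_m(\varphi) = C\,\|J\varphi\|, \]
so $L$ is bounded on $W$ with norm at most $C$.

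Next I would extend $L$ to a bounded functional $\tilde L\in X^*$ by Hahn--Banach, and then identify $X^*$. Because the dual of $C[0,1]$ under the sup-norm is, by the Riesz representation theorem, the space of complex (regular) Borel measures on $[0,1]$, the dual of the finite product $X$ is the product of the duals; hence there exist complex Borel measures $\mu_0,\ldots,\mu_N$ on $[0,1]$ with $\tilde L(f_0,\ldots,f_N)=\sum_{m=0}^N\int_0^1 f_m\,\dd\mu_m$. Evaluating $\tilde L$ on $J\varphi$ then gives
\[ \l u,\varphi\r = L(J\varphi) = \tilde L(J\varphi) = \sum_{m=0}^N \int_0^1 \varphi^{(m)}\,\dd\mu_m, \]
which is the asserted representation with $n=N$.

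The step that needs care --- rather than being a deep obstacle --- is the use of the jet map: one cannot apply Riesz directly to $u$ on $\mathcal{E}[0,1]$, because the bound involves derivatives of all orders up to $N$ rather than a single sup-norm, and $W$ is in general not closed in $X$. Embedding via $J$ is exactly what turns the order-$N$ estimate into a genuine sup-norm (order-zero) estimate on a subspace to which Hahn--Banach and Riesz legitimately apply. I would also remark that the $\mu_m$ are highly non-unique, since $W\neq X$ and the Hahn--Banach extension is non-canonical, but only existence is claimed. Finally, integrating each $\mu_m$ once (using that its distribution function is of bounded variation, hence bounded) recovers, modulo endpoint Dirac terms, the ``finite sum of weak derivatives of bounded functions'' phrasing used in \S4.
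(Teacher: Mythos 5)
Your proposal is correct and is essentially identical to the paper's own proof: your jet map $J$ is the paper's $\Gamma$, and both arguments convert the order-$N$ seminorm estimate into a sup-norm bound on the (non-closed) range of this map, then apply Hahn--Banach followed by the Riesz representation of the dual of $C[0,1]\oplus\cdots\oplus C[0,1]$. The only difference is cosmetic: the paper phrases the bound in terms of $\mathrm{ord}(u)=n$ rather than the constant $N$ from \eqref{seminorms}, and it obtains the ``derivatives of bounded functions'' variant by replacing $C[0,1]$ with $L^1[0,1]$ rather than by integrating the measures.
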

\begin{proof}
Let $u\in \mathcal{E}'[0,1]$ with $\mathrm{ord}(u)=n$. Define the map
\[ \Gamma: \mathcal{E}[0,1] \rightarrow \underbrace{C[0,1] \oplus \cdots \oplus C[0,1]}_{\textrm{$n+1$ copies}} : \varphi \mapsto \left( \varphi, \varphi', \ldots, \varphi^{(n)} \right), \]
where $C[0,1]$ denotes the Banach space of continuous functions on $[0,1]$ equipped with the supremum norm. Note that $\Gamma(\varphi)$ is uniquely determined by $\varphi$. Now consider the linear functional $\Lambda: \Gamma(\mathcal{E}[0,1])\rightarrow \mathbf{C}$ defined by
\[ \Lambda(\Gamma(\varphi)) = \l u, \varphi \r. \]
This map is bounded when $\Gamma(\mathcal{E}[0,1])$ is treated as a subspace of $C[0,1]\oplus \cdots \oplus C[0,1]$. Indeed, since $u \in \mathcal{E}'[0,1]$ has order $n$, we have
\[ \left| \Lambda(\Gamma(\varphi)) \right| = \left| \l u, \varphi \r\right| \leq C \sum_{m\leq n} \| \varphi^{(m)} \|_\infty \quad \forall \varphi \in \mathcal{E}[0,1]. \]
By the Hahn-Banach theorem, $\Lambda$ has an extension to all of $C[0,1] \oplus \cdots \oplus C[0,1]$, and by the Reisz-Markov theorem this extension must have the form
\[ \Lambda(\psi_0, \ldots, \psi_n ) = \sum_{m=0}^n \int_0^1 \psi_m(\tau)\, \dd \mu_m(\tau), \]
where $\{\mu_0, \ldots, \mu_n\}$ are complex Borel measures. In particular,
\[ \l u, \varphi \r = \Lambda(\Gamma(\varphi)) = \sum_{m=0}^n \int_0^1 \varphi^{(m)}(\tau)\, \dd \mu_m(\tau)  \quad \forall \varphi \in \mathcal{E}[0,1], \]
which is the representation stated in the theorem.
\end{proof}
A similar result holds if one considers the map
\[ \Gamma: \mathcal{E}[0,1] \rightarrow L^1[0,1] \oplus \cdots \oplus L^1[0,1]. \]
In this case one finds a collection $\{f_0, \ldots, f_n\}$ in $L^\infty[0,1]$, such that
\[ \l u, \varphi \r = \sum_{m=0}^n \int_0^1 f_m(\tau) \varphi^{(m)}(\tau)\, \dd \tau. \]
This is the form of the result used in \S 4.

\end{document}